\newtheorem{lemma}{Lemma}[section]
\newtheorem{theorem}[lemma]{Theorem}
\newtheorem{obs}[lemma]{Observation}
\newtheorem{coll}[lemma]{Corollary}
\newtheorem{defi}[lemma]{Definition}
\DeclareMathOperator{\HK}{HK}
\DeclareMathOperator{\GKdim}{GKdim}
\DeclareMathOperator{\supp}{supp}
\DeclareMathOperator{\grecd}{gcd}
\newcommand{\s}{\subseteq}
\numberwithin{equation}{section}
\title{}
\date{}
\author{}
\begin{document}

    \title{The Gelfand--Kirillov dimension of Hecke--Kiselman
    algebras}

    \author{Magdalena Wiertel}
    \date{}
    \maketitle

    \begin{abstract}
    Hecke--Kiselman algebras $A_{\Theta}$, over a field $k$, associated to finite
    oriented graphs $\Theta$ are considered. It has been known that every such algebra
    is an automaton algebra in the sense of Ufranovskii. In particular, its
    Gelfand--Kirillov dimension is an integer if it is finite.
    In this paper, a numerical invariant of the graph $\Theta$ that determines
    the dimension of $A_{\Theta}$ is found.
    Namely, we prove that the Gelfand--Kirillov dimension of $A_{\Theta}$ is the
    sum of the number of cyclic subgraphs of $\Theta$ and the number of oriented paths
    of a special type in the graph, each counted certain specific number of times.

    \end{abstract}

    \vspace{20pt}

    \noindent\textbf{2010 Mathematics Subject Classification}: 16P90, 16S15,
    16S36, 20M05, 20M25, 05C25.

    \noindent\textbf{Key words}: Hecke--Kiselman algebra, monoid, Gelfand--Kirillov dimension, reduced words, automaton algebra

%
%    \vspace{50pt}
%
%    \begin{tabular}{lll}
%        Magdalena Wiertel \\
%        m.wiertel@mimuw.edu.pl \\
%        & & \\
%        Institute of Mathematics & & \\
%        University of Warsaw & & \\
%        Banacha 2 & & \\
%
%
%        02-097 Warsaw, Poland & &
%    \end{tabular}

    \vspace{30pt}

\section{Introduction}

Let $\Theta = (V(\Theta), E(\Theta))$ be an oriented finite simple
graph with $n$ vertices $x_1, \ldots, x_n$. In \cite{maz} the
Hecke--Kiselman monoid $\HK_{\Theta}$ associated with $\Theta$ has
been defined by the following presentation.
\begin{itemize}
    \item[(i)] $\HK_{\Theta}$ is generated by elements $ x_i^2 = x_i$,
    where $1 \leq i \leq n$,
    \item[(ii)] if the vertices $x_i$, $x_j$ are not connected in  $\Theta$,
    then  $x_ix_j = x_jx_i$,
    \item[(iii)] if $x_i$, $x_j$ are connected by an arrow $x_i \to x_j$ in $\Theta$,
    then $x_ix_jx_i = x_jx_ix_j = x_ix_j$.
\end{itemize}

\noindent By $k[\HK_{\Theta}]$ we mean the monoid algebra of
$\HK_{\Theta}$ over a field $k$. Since the ground field $k$ does
not play any role in our considerations we will denote this
algebra by $A_{\Theta}$.

Hecke--Kiselman monoids are natural quotients of $0$--Hecke
monoids. The monoid algebras of the latter monoids are
specializations of famous Iwahori--Hecke algebras, crucial in the
representation theory of Coxeter groups, \cite{denton},
\cite{huang16}. Moreover, $0$--Hecke monoids have been applied in
algebraic combinatorics, for instance in \cite{marberg}.
Investigation of Hecke--Kiselman monoids and their algebras fits
into the study of various generalizations of algebraic structures
arising from Coxeter groups. Several combinatorial and structural
properties of Hecke--Kiselman monoids and their monoid algebras
have already been studied for example in \cite{for, maz, mo1, mo2,
wo}.

The aim of the present paper is to describe the Gelfand--Kirillov
dimension of Hecke--Kiselman algebras associated to oriented
graphs in terms of numerical invariants of the underlying graph.
This dimension describes an asymptotic behaviour of the growth of
algebras and is a useful tool in the study of noncommutative
algebras. For basic information on the Gelfand--Kirillov dimension
we refer to \cite{krause}.

The following result obtained in \cite{mo1} is our starting point.

\begin{theorem}\label{gkzor} Let $\Theta$ be a finite oriented
simple graph. The following conditions are equivalent.
\begin{itemize}
    \item[(1)] $\Theta$ does not contain two different simple cycles
    connected by an oriented path of length $\geq 0$,
    \item[(2)] $A_{\Theta}$ is an algebra satisfying a polynomial identity,
    \item[(3)] $\GKdim(A_{\Theta}) < \infty$,
    \item[(4)] the monoid $\HK_{\Theta}$ does not contain a
    free submonoid of rank $2$.
\end{itemize}
\end{theorem}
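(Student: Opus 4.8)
The plan is to prove the cyclic chain $(1)\Rightarrow(2)\Rightarrow(3)\Rightarrow(4)\Rightarrow(1)$, with the two ``soft'' steps handled by general principles. For $(2)\Rightarrow(3)$: the algebra $A_{\Theta}$ is generated by the finitely many idempotents $x_1,\dots,x_n$, hence is affine, and every affine PI algebra has finite Gelfand--Kirillov dimension (in fact an integer one), so it suffices to quote the standard results in \cite{krause}. For $(3)\Rightarrow(4)$ we argue by contraposition: if $\HK_{\Theta}$ contains a free submonoid of rank $2$ generated by $a,b$, then, since the elements of a monoid form a $k$-basis of its monoid algebra, the subalgebra of $A_{\Theta}$ generated by $a$ and $b$ is isomorphic to the free associative algebra $k\langle X,Y\rangle$; the latter has exponential growth, and since for a subalgebra $B\subseteq A_{\Theta}$ one has $\GKdim(B)\le\GKdim(A_{\Theta})$, we get $\GKdim(A_{\Theta})=\infty$.

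The implication $(4)\Rightarrow(1)$ is proved by contraposition and is one of the two combinatorial cores. Assume $\Theta$ contains two distinct simple cycles $C_1,C_2$ joined by an oriented path $\pi$ of length $\ge 0$ (length $0$ meaning that $C_1$ and $C_2$ have a common vertex). Let $v$ be the vertex of $C_1$ where $\pi$ starts and $w$ the vertex of $C_2$ where it ends; write $c_1$ for a word that reads $C_1$ once, starting and ending at $v$, write $p$ for the word of $\pi$, and $c_2$ for a word that reads $C_2$ once, starting and ending at $w$. One then exhibits a free submonoid of $\HK_{\Theta}$ of rank $2$ on two elements built from $c_1,p,c_2$ --- for instance on $c_1$ and $c_1\,p\,c_2$, or a minor variant --- by setting up a confluent rewriting system (equivalently a normal form) for the sub-presentation supported on the vertices occurring in $C_1\cup\pi\cup C_2$, and checking that two words over the two chosen generators that differ as abstract words cannot be identified in the monoid. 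The delicate point here, which I expect to be the main obstacle, is to choose the generators so that traversing a cycle does not behave like multiplication by an idempotent (which would collapse freeness): each generator must retain an ``unabsorbed'' letter coming from the interior of $C_1$ or $C_2$, and one uses essentially that $\pi$ can be traversed in only one direction.

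For $(1)\Rightarrow(2)$ one exploits the structure that hypothesis $(1)$ imposes: passing to the condensation of $\Theta$ into strongly connected components, $(1)$ is equivalent to saying that every nontrivial strongly connected component is a single oriented cycle and that no oriented path joins two of these cycle-components; in particular there are only finitely many, say $m$, pairwise incomparable cycles, and the rest of $\Theta$ is acyclic and hence contributes only a finite sub-monoid. Using the automaton (monomial-type) description of $A_{\Theta}$ one produces a normal form for elements of $\HK_{\Theta}$ as bounded-length words in the acyclic part interleaved with powers of the $m$ cycle-words, and this yields an embedding of $A_{\Theta}$ into a ring of block upper-triangular matrices over the polynomial ring $k[t_1,\dots,t_m]$ (equivalently, a finite chain of ideals of $A_{\Theta}$ with PI factors). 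Either way $A_{\Theta}$ satisfies a polynomial identity, and the matrix picture simultaneously makes the finiteness and integrality of $\GKdim(A_{\Theta})$ transparent. Making the normal form and its confluence precise --- i.e.\ genuinely proving that the monoid decomposes in this controlled way --- is the other substantial task; the PI conclusion is routine once it is available.
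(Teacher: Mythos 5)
This theorem is not proved in the paper at all: it is quoted verbatim from \cite{mo1} as the starting point, so the relevant comparison is whether your argument actually supplies the content of that cited result. Your two ``soft'' implications are fine: $(2)\Rightarrow(3)$ is Berele's theorem for affine PI algebras as in \cite{krause}, and $(3)\Rightarrow(4)$ is correct because distinct elements of a monoid are linearly independent in the monoid algebra, so a rank-$2$ free submonoid generates a copy of the free algebra, which has exponential growth, and GK-dimension does not decrease when passing to subalgebras.

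The two remaining implications, however, are exactly where the substance lies, and in both cases you defer it. For $(4)\Rightarrow(1)$ you name candidate generators ($c_1$ and $c_1pc_2$, ``or a minor variant'') but do not verify freeness; you yourself flag the absorption problem (idempotency of the generators and relations of type $x_ix_jx_i=x_ix_j$ can collapse products of cycle words), and without the confluence/normal-form analysis for the induced subgraph on $V(C_1)\cup V(\pi)\cup V(C_2)$ there is no proof that \emph{any} such pair works; this verification is precisely the combinatorial core of \cite{mo1}. For $(1)\Rightarrow(2)$ the asserted normal form ``bounded words interleaved with powers of the $m$ cycle-words'' is an oversimplification: already for a single cycle $C_N$ the reduced words are factors of the infinite words $q_{N,i}^{\infty}$ for $i=0,\dots,N-2$ (this is why the present paper must invoke \cite{wo}), not powers of one cycle word, and the claimed embedding of $A_{\Theta}$ into block upper-triangular matrices over $k[t_1,\dots,t_m]$ is stated without construction. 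Note also that finite GK-dimension (polynomial growth) does not by itself imply PI for semigroup algebras, so $(1)\Rightarrow(2)$ genuinely needs such an ideal-chain or matrix embedding argument to be carried out; as written, your proposal replaces the two hard implications by programmatic sketches, i.e.\ it reproves only the easy half of the cited theorem.
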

 Our reasoning relies on the property discovered in \cite{mo2},
saying that the Hecke--Kiselman algebras associated to oriented
graphs are automaton algebras. This class of algebras was
introduced as a generalization of algebras with finite Gr\"obner
basis, \cite{ufn}. An automaton algebra is an algebra whose set of
normal forms is recognised by a finite deterministic automaton.
This property implies several consequences for the combinatorics
and growth of the algebra. In particular, the Gelfand--Kirillov
dimension of automaton algebras is an integer if it is
finite, \cite{ufn}, and can be expressed using certain forms of
elements of the algebra, \cite{automata}. To obtain our main
result in Theorem~\ref{main} we investigate the combinatorics of
words in Hecke--Kiselman monoids. Gr\"obner bases of
Hecke--Kiselman algebras described in \cite{mo2} and the
characterization of almost all elements of the monoid associated
to an oriented cycle of any length, obtained in \cite{wo}, are
extensively used in our approach.

\section{Preliminaries}\label{prelim}

\noindent Following \cite{ufn}, let us recall some definitions that concern combinatorics on words in the context of finitely generated algebras.\\

\noindent Let $F$ denote the free monoid on the set $X$ of $n\ge
3$ free generators $x_1,\dotsc,x_n$. Let $k$ be a field and let
$k[F] =k\langle x_1,\dotsc,x_n\rangle$ denote the corresponding
free algebra over $k$. For every $x \in X$ and $w \in F$ by
$|w|_x$ we mean the number of occurrences of $x$ in $w$. By $|w|$
we denote the length of the word  $w$. The support of $w \in F$,
denoted by $\supp(w)$, stands for the set of all $x \in X$ such
that $|w|_x > 0$. We say that the word $w = x_{i_1}\cdots x_{i_r}
\in F$ is a subword of the word $v \in F$, where $x_{i_j} \in X$,
if $ v = v_1x_{i_1}\cdots v_r x_{i_r} v_{r+1}$, for some $v_1,
\ldots, v_{r+1} \in F$. If $v_2, \ldots, v_r$ are trivial words,
then we say that $w$ is a factor of $v$. By a prefix (suffix) of
the word $w$ we mean any factor $u\neq 1$ such that $w=uv$ ($w=vu$) for some $v$.\\

\noindent Assume that a well order $<$ is fixed on
$X$ and consider the induced degree-lexicographical order on $F$
(also denoted by $<$). Let $A$ be a finitely
generated algebra over $k$ with a set of generators $r_1, \ldots,
r_n$ and let $\pi: k[F] \to A$ be the natural homomorphism of
$k$-algebras with $\pi(x_i) = r_i$. We will assume that $\ker
(\pi)$ is spanned by elements of the form $w-v$, where $w,v\in F$
 (in other words, $A$ is a semigroup algebra). Let $I$ be the
ideal of $F$ consisting of all leading monomials of $\ker(\pi)$.
The set of normal words corresponding to the chosen
presentation for $A$ and to the chosen order on $F$ is defined by
$N(A) = F \setminus I$. Describing the set $N(A)$ is related to finding a Gr\"obner basis of the ideal $J=\ker(\pi)$ of $k[F]$.
 Recall that a subset $G$ of $J$ is called a Gr\"obner basis of
$J$ (or of $A$) if $0\notin G$, $J$ is generated by $G$ as an ideal and
for every nonzero $f\in J$ there exists $g\in G$ such that the
leading monomial $\overline{g}\in F$ of $g$ is a factor of the
leading monomial $\overline{f}$ of $f$.
If $G$ is a Gr\"obner basis of $A$, then a word $w\in F$ is normal
if and only if $w$ has no factors that are leading
monomials in $g\in G$.\\

Gr\"obner bases of Hecke--Kiselman algebras associated to oriented
graphs have been characterized in \cite{mo2}. For any oriented
graph $\Theta$, $t \in V(\Theta)$ and $w \in F = \langle
V(\Theta)\rangle$ we write $w \nrightarrow t$ if $|w|_t = 0$ and
there are no $x \in \supp(w)$ such that $x \rightarrow t$ in
$\Theta$. Similarly, we define $t \nrightarrow w$: again we assume
that $|w|_t = 0$ and there is no arrow $t \to y$, where $y \in
\supp(w)$. In the case when $t \nrightarrow w$ and $w \nrightarrow
t$, we write $t \nleftrightarrow w$. A vertex $v \in  V(\Theta)$
is called a sink vertex if no arrow begins in $v$. Analogously one
defines a source vertex. Sink and source vertices are called
terminal vertices.

\begin{theorem}[\cite{mo2}]\label{basis} Let $\Theta$ be a finite simple oriented graph with
    vertices $V(\Theta) = \{x_1, x_2, \ldots, x_n\}$. Extend the natural ordering $x_1 < x_2 < \cdots < x_n$
    on the set $V(\Theta) $ to the deg-lex order on the free monoid $F = \langle V(\Theta)  \rangle$.
    Consider the following set $T$ of reductions on the algebra $k[F]$:
    \begin{itemize}
        \item[(i)] $(twt, tw)$, for any $t \in V(\Theta)$ and $w \in F$ such that  $w \nrightarrow t$,
        \item[(ii)] $(twt, wt)$, for any $t \in V(\Theta)$ and $w \in F$ such that  $t \nrightarrow w$,
        \item[(iii)] $(t_1wt_2, t_2t_1w)$, for any $t_1, t_2 \in V(\Theta)$ and $w \in F$
        such that $t_1 > t_2$ and $t_2 \nleftrightarrow t_1w$.
    \end{itemize}
    \noindent Then the set $\{w - v, \text{ where } (w, v) \in T\}$
    forms a Gr\"obner basis of the algebra $A_{\Theta}$.
\end{theorem}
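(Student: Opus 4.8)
The natural framework here is Bergman's Diamond Lemma (equivalently, Newman's Lemma applied to the string rewriting system $T$ on the free monoid $F$), and I would organise the argument into three parts. First I would check that $T$ is compatible with the fixed deg-lex order, i.e. that $w>v$ for every $(w,v)\in T$: for rules of type (i) and (ii) this is immediate since $|v|=|w|-1$, while for type (iii) both sides have equal length but $t_1wt_2$ begins with $t_1$ and $t_2t_1w$ begins with $t_2<t_1$. In particular, since $<$ is a well-order on $F$, the system $T$ is terminating, so by Newman's Lemma it will suffice to establish local confluence.

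Second, I would show that the ideal $J_0=\langle\, w-v : (w,v)\in T\,\rangle$ of $k[F]$ equals $J=\ker\pi$. For $J_0\subseteq J$ one verifies that each relation $w=v$ holds in $\HK_{\Theta}$. The case (iii) is easy: the hypothesis $t_2\nleftrightarrow t_1w$ forces $t_2$ to be disconnected from every letter of $t_1w$, so $t_2$ commutes past all of them and $t_1wt_2=t_2t_1w$. For (i), that $twt=tw$ when $w\nrightarrow t$ follows by induction on $|w|$, peeling off the first letter $x$ of $w$: since $x\neq t$ and $x\nrightarrow t$, either $x$ commutes with $t$ or $t\to x$ (whence $tx=txt$ from the defining relation $txt=tx$), and in both cases one reduces to the inductive hypothesis applied to the rest of $w$; case (ii) is symmetric. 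For the reverse inclusion $J\subseteq J_0$ it is enough to recover the defining relations (i)--(iii) of $\HK_{\Theta}$ as instances of rules in $T$ with $w$ the empty word or a single letter (e.g. $x^2=x$ is type (i) with $w=1$; $xyx=xy$ and $yxy=xy$ for $x\to y$ are type (i) and type (ii) with a one-letter $w$, and their difference gives $xyx=yxy$; non-edges give type (iii) with $w=1$), which is a direct check. Hence $J_0=J$, and consequently $N(A_{\Theta})$ is exactly the set of words of $F$ having no factor equal to a left-hand side of a rule of $T$.

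Third, and this is the technical heart, I would verify that every overlap (and inclusion) ambiguity of $T$ is resolvable; by the Diamond Lemma this precisely says that $\{w-v:(w,v)\in T\}$ is a Gr\"obner basis of $J_0=J$. Because the ``middle word'' $w$ in each rule is an arbitrary element of $F$ restricted only by the conditions $\nrightarrow$ and $\nleftrightarrow$, the critical pairs come in finitely many parametrised families, indexed by the pair of rule types that overlap ((i)-(i), (i)-(ii), \dots, (iii)-(iii)) and by how the repeated letters $t$, respectively the extremal letters $t_1,t_2,t_3$, interleave in the ambiguous word. In each family I would rewrite the word both ways and reduce the two outcomes to a common word, the key point throughout being that the side conditions are \emph{stable} under the reductions: if $w\nrightarrow t$ and one performs an inner reduction inside $w$, then the resulting word still satisfies $\nrightarrow t$ (supports only shrink, and the sorting rule does not introduce new letters), and similarly $t_2\nleftrightarrow t_1w$ is preserved.

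I expect the main obstacle to be the bookkeeping in the mixed cases: overlaps between a length-shortening rule of type (i)/(ii) and the length-preserving ``sorting'' rule of type (iii), and especially self-overlaps of type (iii) on words of the form $t_1wt_2w't_3$, where one must simultaneously track several order relations among $t_1,t_2,t_3$ and the way the $\nleftrightarrow$ conditions propagate after each rewrite. Inclusion ambiguities genuinely occur (one left-hand side can be a factor of another since the $w$'s are unbounded in length) and must be folded into the same case analysis; I would organise the resolution so that, after applying the outer reduction, the inner pattern either disappears or reappears in a position where the accumulated side conditions guarantee it can again be reduced to the same normal form.
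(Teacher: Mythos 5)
The statement you were asked to prove is not proved in this paper at all: Theorem~\ref{basis} is imported verbatim from \cite{mo2}, so there is no internal argument to compare yours with, and your choice of framework (Bergman's Diamond Lemma for the rewriting system $T$ on $F$) is certainly the natural one for such a result. Your two preliminary steps are essentially correct and complete: compatibility of $T$ with the deg-lex order is immediate as you say, and the identification of the ideal generated by $\{w-v:(w,v)\in T\}$ with $\ker\pi$ works — the induction on $|w|$ for $twt=tw$ under $w\nrightarrow t$ goes through (peeling a letter $x$ with either $x$ and $t$ non-adjacent or $t\to x$, using $txt=tx$ to insert and remove the extra $t$), and the defining relations of $\HK_{\Theta}$ are indeed the instances of (i)--(iii) with $|w|\leqslant 1$.

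The genuine gap is your third step, which is the entire mathematical content of the theorem. You correctly reduce the problem to resolvability of all overlap and inclusion ambiguities of $T$, but you do not resolve a single one: you classify where critical pairs can occur and then assert, on the strength of the observation that the side conditions $w\nrightarrow t$, $t\nrightarrow w$, $t_2\nleftrightarrow t_1w$ depend only on supports and that supports never grow under reductions, that everything will converge. That observation only disposes of the easy ambiguities, namely those in which one left-hand side sits entirely inside the middle word $w$ of another rule; it says nothing about the overlaps that carry the actual difficulty, in which the two left-hand sides share boundary letters ($t$, or $t_1,t_2$), so that performing one reduction destroys or displaces the other pattern and one must exhibit, case by case, a new reduction sequence bringing both one-step results to a common word. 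Since $T$ is an infinite, parametrised family of rules, this is a substantial case analysis, and nothing in your proposal excludes the existence of a non-resolvable critical pair — ruling that out is precisely what the theorem asserts, and is what the proof in \cite{mo2} supplies. As written, your text is a correct and well-organised plan, with the two routine steps executed and the essential step only announced.
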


\noindent To emphasize the use of the theorem above, whenever we
consider the set $N(A_{\Theta})$ of normal words of the
Hecke--Kiselman algebra $A_{\Theta} = k[\HK_{\Theta}]$ that is
obtained via reductions from the set $T$, we will say that the
elements of $N(A_{\Theta})$ are the reduced words of $A_{\Theta}$.

This result leads to the following corollary, obtained in \cite{mo2}, that will be useful in calculation of the Gelfand--Kirillov dimension of Hecke--Kiselman algebras.

 \begin{theorem}\label{autom} Assume that $\Theta$ is a finite simple oriented graph. Then
    $A_{\Theta}$  is an automaton algebra, with respect
    to any deg-lex order on the underlying free monoid of rank $n$.
    Consequently, the Gelfand--Kirillov dimension $\GKdim(A_{\Theta})$ of
    $A_{\Theta}$ is an integer if it is finite.
 \end{theorem}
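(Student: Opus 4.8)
The plan is to unwind the definition of an automaton algebra—that the set $N(A_\Theta)$ of normal (reduced) words is recognised by a finite deterministic automaton, i.e.\ forms a regular language over the alphabet $V(\Theta)$—and to read this regularity directly off the Gr\"obner basis supplied by Theorem~\ref{basis}. Recall that once a Gr\"obner basis $G$ is fixed, a word $w\in F$ lies in $N(A_\Theta)$ if and only if no factor of $w$ is the leading monomial of some $g\in G$. By Theorem~\ref{basis}, for the deg-lex order extending $x_1<\cdots<x_n$ the leading monomials of $G$ are precisely: the words $twt$ with $t\in V(\Theta)$, $w\in F$ and $w\nrightarrow t$ (type (i)); the words $twt$ with $t\nrightarrow w$ (type (ii)); and the words $t_1wt_2$ with $t_1>t_2$ and $t_2\nleftrightarrow t_1w$ (type (iii))—here one checks that in each reduction the longer word, resp.\ the lexicographically larger word of equal length, is the leading term. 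Since the vertices of $\Theta$ may be relabelled arbitrarily, the same description is valid for every deg-lex order on $F$, so it suffices to treat the order fixed in Theorem~\ref{basis}.

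The key observation is that the relations $w\nrightarrow t$, $t\nrightarrow w$ and $t_2\nleftrightarrow t_1w$ depend only on $\supp(w)$. Writing $N^-(t)$ and $N^+(t)$ for the in- and out-neighbourhoods of $t$ in $\Theta$, one has $w\nrightarrow t\iff \supp(w)\cap(\{t\}\cup N^-(t))=\emptyset$, $\ t\nrightarrow w\iff \supp(w)\cap(\{t\}\cup N^+(t))=\emptyset$, and $t_2\nleftrightarrow t_1w\iff \bigl(\{t_1\}\cup\supp(w)\bigr)\cap\bigl(\{t_2\}\cup N^-(t_2)\cup N^+(t_2)\bigr)=\emptyset$. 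Consequently, for each fixed $t$ the set of type-(i) leading monomials equals the regular language $t\,\bigl(V(\Theta)\setminus(\{t\}\cup N^-(t))\bigr)^{*}\,t$, and likewise for type (ii) with $N^-$ replaced by $N^+$; for each fixed pair $t_1>t_2$ with $t_1$ not adjacent to $t_2$ the set of type-(iii) leading monomials equals $t_1\,\bigl(V(\Theta)\setminus(\{t_2\}\cup N^-(t_2)\cup N^+(t_2))\bigr)^{*}\,t_2$, and it is empty for the remaining pairs. Taking the finite union over all $t$ and all pairs $(t_1,t_2)$ shows that the set $O$ of all leading monomials of $G$ is a regular language, even though it is infinite.

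It then remains to note that the set of words avoiding $O$ as a factor is $N(A_\Theta)=F\setminus(F\cdot O\cdot F)$, which is regular since regular languages are closed under concatenation and complementation; hence $N(A_\Theta)$ is recognised by a finite deterministic automaton and $A_\Theta$ is an automaton algebra for every deg-lex order on the free monoid of rank $n$. (Alternatively one may construct the recognising automaton explicitly, its states recording, for the suffix read so far, which vertices have occurred since the most recent occurrence of each vertex; this yields a concrete, if larger, automaton.) The final assertion is then immediate: by \cite{ufn} the Gelfand--Kirillov dimension of an automaton algebra is an integer whenever it is finite, and applying this to $A_\Theta$ gives the claim. The only point needing genuine care is the bookkeeping in the middle paragraph—matching $\nrightarrow$ and $\nleftrightarrow$ to the correct support conditions, and in type (iii) remembering that the leading letter $t_1$ is itself subject to the disjointness constraint—but no deeper difficulty arises.
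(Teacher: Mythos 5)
Your argument is correct, and it is essentially the expected one: this theorem is not proved in the paper at all (it is quoted from the reference \cite{mo2}), and the natural derivation from Theorem~\ref{basis} is exactly what you give — the side conditions $w\nrightarrow t$, $t\nrightarrow w$, $t_2\nleftrightarrow t_1w$ depend only on $\supp(w)$, so the set $O$ of leading monomials is a finite union of languages of the form $t\,S^{*}\,t'$ with $S\subseteq V(\Theta)$, hence regular, and $N(A_\Theta)=F\setminus(F\cdot O\cdot F)$ is regular by closure of regular languages under concatenation and complementation (the complementation step is not literally contained in the paper's regular-expression definition of regularity, but is standard via Kleene's theorem, which the paper invokes). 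Your identification of the leading monomials, including the deg-lex comparison $t_1wt_2>t_2t_1w$ in type (iii) and the relabelling remark covering an arbitrary deg-lex order, is accurate, and the final assertion correctly reduces to Ufnarovskii's theorem that an automaton algebra has integer Gelfand--Kirillov dimension whenever it is finite.
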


\noindent Recall that $A$ is an automaton algebra if $N(A)$ is a
regular language. That means that this set is obtained from a
finite subset of $F$ by applying a finite sequence of operations
of union, multiplication and operation $*$ defined by $T^* =
\bigcup_{i \geq 0}T^i$, for $T \subseteq F$. Similarly, we define
$T^+ = \bigcup_{i \geq 1}T^i$ for $T \subseteq F$. If $T = \{w\}$
for some $w \in F$, then we write $T^* = w^*$ and $T^+ = w^+$. An
expression built recursively from the set of letters from $F$
using operations of union, multiplication and $*$ is called a
regular expression. The importance of automaton algebras comes
from the deep results from the theory of automata. Recall that a
finite automaton is an oriented graph with two distinguished sets
of vertices (possibly intersecting), called initial and final
states and with edges labelled with letters of a finite alphabet
$X$. An automaton is called a deterministic automaton, if there is
only one initial vertex and, at every vertex, for every letter,
there exists a unique edge beginning with that vertex and marked
by that letter. The language defined by an automaton consists of
the set of all the words formed by reading through a path from any
initial vertex to any final vertex. The famous Kleene's theorem
states that every regular language may be defined by a
deterministic automaton. This property is especially useful in the
case of
automaton algebras of finite Gelfand--Kirillov dimension, as we will see below.\\

\noindent It is known that the GK-dimension of an automaton
algebra is either infinite or an integer, see for example
Theorem~3 on page 97 in~\cite{ufn}. Moreover, in the finite
dimensional case, the dimension is related to certain forms of
regular-expressions representations of the regular languages of
normal words, \cite{automata}. We reformulate the results of
\cite{automata} to apply them in the case of Hecke--Kiselman
algebras.

Let us start with the necessary notations and remarks. The density
function of a regular language $L\subseteq F$, where $F$ is the
free monoid over the set $X$ is defined as $p_{L}(n)=|L\cap
X^{n}|$, that is the number of elements in $L$ of length $n$.
Given two functions $f(n)$ and $g(n)$, we say that $f(n)$ is
$O(g(n))$ if there are positive constants $C$ and $n_0$ such that
$f(n)\leqslant Cg(n)$ for every $n\geqslant n_0$. In particular,
the density function of the regular language $N(A)$ of normal
words of an automaton algebra $A$ is $O(n^{k-1})$ for some
$k\geqslant 1$ precisely when the growth of $A$ is $O(n^{k})$ and,
consequently, $\GKdim(A)\leqslant k$. Therefore the following can
be obtained as a consequence of Theorem~3 in \cite{automata}.

\begin{theorem}[\cite{automata}, Theorem~3]\label{automata-est} The Gelfand--Kirillov
dimension of an automaton algebra $A$ is not bigger than $k$ for some $k\geqslant 0$ if
and only if the set of normal words $N(A)$ can be represented as a finite union of
regular expressions of the following form
\begin{equation}v_0w_{i_1}^*v_1w_{i_2}^*v_2\ldots v_{s-1}w_{i_s}^*v_{s},\label{dlugosc}
\end{equation}
with $v_0, \ldots, v_{s}\in F$, $w_{i_1}, \ldots, w_{i_s}\in F$ and $ 0 \leqslant s\leqslant k$.
\end{theorem}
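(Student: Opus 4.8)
The plan is to deduce the statement from the density-function characterisation of polynomially bounded regular languages proved in \cite{automata}, together with the dictionary between the density function $p_{N(A)}$ and $\GKdim(A)$ recalled above. Since $A$ is an automaton algebra, Theorem~\ref{autom} and the underlying results of \cite{ufn} ensure that $p_{N(A)}(n)$ is either bounded by a polynomial or of exponential growth; in the polynomial case, if $p_{N(A)}(n)=\Theta(n^{d})$ then the growth function of $A$ equals $\sum_{m\leqslant n}p_{N(A)}(m)=\Theta(n^{d+1})$, so $\GKdim(A)=d+1$. Hence, for $k\geqslant 1$, $\GKdim(A)\leqslant k$ holds if and only if $p_{N(A)}(n)=O(n^{k-1})$. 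So, for $k\geqslant 1$, it suffices to show that $p_{N(A)}(n)=O(n^{k-1})$ if and only if $N(A)$ is a finite union of languages of the form \eqref{dlugosc} with $s\leqslant k$; the case $k=0$ is the trivial equivalence between $A$ being finite dimensional and $N(A)$ being a finite union of single words.

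The ``if'' direction I would carry out directly as an elementary count. Suppose $N(A)=\bigcup_{l=1}^{r}E_{l}$ with $E_{l}=v_{0}w_{1}^{*}v_{1}\cdots w_{s_{l}}^{*}v_{s_{l}}$ (suppressing the label $l$ on the $v$'s and $w$'s) and each $s_{l}\leqslant k$. Deleting each star $w_{j}^{*}$ with $w_{j}=1$, which does not increase $s_{l}$, we may assume $|w_{j}|\geqslant 1$. A word of length $n$ in $E_{l}$ is $v_{0}w_{1}^{a_{1}}v_{1}\cdots w_{s_{l}}^{a_{s_{l}}}v_{s_{l}}$ for non-negative integers $a_{j}$ with $\sum_{j}a_{j}|w_{j}|=n-\sum_{i}|v_{i}|$, and the number of solutions of this single linear equation is $O(n^{s_{l}-1})$; hence $p_{E_{l}}(n)=O(n^{s_{l}-1})=O(n^{k-1})$ and $p_{N(A)}(n)\leqslant\sum_{l}p_{E_{l}}(n)=O(n^{k-1})$, so $\GKdim(A)\leqslant k$ by the dictionary above. (When $k=0$ every $E_{l}$ is a single word, so $N(A)$ is finite.)

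For the ``only if'' direction, which is the substantial one, assume $p_{N(A)}(n)=O(n^{k-1})$. By Kleene's theorem $N(A)$ is recognised by a finite deterministic automaton $\mathcal{D}$, which I would take to be trim, so that every state lies on a path from the initial state to a final state. The first step is to show that no state of $\mathcal{D}$ lies on two cycles with non-commuting labels: if a state $q$ carried cycles labelled by words $u_{1},u_{2}$ that are not powers of a common word, then by the defect theorem $\{u_{1},u_{2}\}$ is a code, so $\{u_{1},u_{2}\}^{*}$ is free of rank $2$; choosing a word $\alpha$ labelling a path from $q_{0}$ to $q$ and a word $\beta$ labelling a path from $q$ to a final state, we would get $\alpha\{u_{1},u_{2}\}^{*}\beta\subseteq N(A)$, forcing $p_{N(A)}$ to grow exponentially, a contradiction. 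Hence all cycles through any fixed state are powers of one primitive word, and, by strong connectivity, all cycles inside a fixed strongly connected component $C$ are powers of conjugates of a single primitive word $\rho_{C}$. Enumerating over all directed paths $C_{0},C_{1},\ldots,C_{m}$ in the acyclic condensation of $\mathcal{D}$, over the states at which these components are entered and left, and over the (finitely many) words read along the connecting transitions, one rewrites $N(A)$ as a finite union of regular expressions of the form \eqref{dlugosc}, the number of stars in each being the number of non-trivial strongly connected components traversed, each of which, being thin, contributes a single starred factor. Finally, along any directed path visiting $m$ non-trivial components one can pump the chosen cycles in those components independently and obtain $\Omega(n^{m-1})$ distinct words of length at most $n$ in $N(A)$; comparing with $p_{N(A)}(n)=O(n^{k-1})$ yields $m\leqslant k$, so every expression obtained has $s\leqslant k$.

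I expect the main obstacle to be precisely this structural analysis of $\mathcal{D}$: turning ``thinness'' of every strongly connected component into an explicit finite union of expressions of the shape \eqref{dlugosc} with the number of stars controlled, and justifying the matching lower bound $p_{N(A)}(n)=\Omega(n^{m-1})$, which requires ruling out ``reparsing'' coincidences among the pumped words. This is exactly the content of Theorem~3 of \cite{automata}, so in a fully written proof this direction would be obtained by invoking that theorem; what genuinely remains to be done by hand is the elementary count in the ``if'' direction and the translation between ``$p_{N(A)}(n)=O(n^{k-1})$'' and ``$\GKdim(A)\leqslant k$'', both of which are routine for automaton algebras.
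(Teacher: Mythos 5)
Your proposal is correct and matches the paper's treatment: the paper likewise obtains this statement by combining the dictionary between the density function $p_{N(A)}(n)=O(n^{k-1})$ and $\GKdim(A)\leqslant k$ for automaton algebras with a direct appeal to Theorem~3 of \cite{automata}, which is exactly what you do (your elementary star-counting argument for the ``if'' direction and your sketch of the DFA structure are just expanded versions of what the cited theorem already provides).
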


\noindent We aim to determine the Gelfand--Kirillov dimension of
the Hecke--Kiselman algebra $A_{\Theta}$ in the finite-dimensional
case. Due to Theorem \ref{gkzor}, this means that the graph
$\Theta$ does not contain two cycles connected by an oriented
path. As we will show, in this case any family of words of
form \eqref{dlugosc} can be rewritten in such a way that each
$w_{i_j}$ corresponds to a certain cycle $C_j$ in the graph
$\Theta$ and $v_i$ contains a vertex which is connected by an edge with the cycle $C_{i+1}$ for $i\neq s$. To estimate the dimension, we will find the maximal possible $s$ for the words of such form.

Every vertex of $\Theta$ that belongs to some cycle will be called
a cycle vertex, or a cycle generator of $\HK_{\Theta}$. Any vertex
that is not a cycle vertex will be called a non-cycle vertex
(resp. a non-cycle generator).

We end this section with a general observation concerning the
possible forms of elements $w_{i_j}$ in~\eqref{dlugosc}.

\begin{obs}\label{supp}
    Let $\Theta$ be a finite simple oriented graph such that $A_{\Theta}$ is of finite Gelfand--Kirillov
    dimension.  Let $C_{1}, \ldots, C_{k}$ be the set of disjoint simple cycles
    in $\Theta$, where $C_{l}$ is of the form $$x_{1,l} \to x_{2,l} \to \ldots \to x_{n_l,l} \to x_{1,l},$$
    for some $n_l\geqslant 3$ and $1 \leq l \leq k$. Assume any deg-lex order on $F$ such that
    we have $x < y$ for some $x \in C_{r}$ and $y \in C_{s}$ if and only if either
    $r < s$, or if $r = s$ and $x = x_{p, r}, y = x_{q, r}$, for $p < q$.
    Assume that for some $1 \neq w \in F$, the words $w^m \in F$ are reduced
    with respect to the reduction set $T$ in Theorem \ref{basis} (constructed
    with respect to the chosen deg-lex order) for every $m\geqslant 1$. Then $w$ is
    a factor of the infinite word of the form $ (q_{N,i})^{\infty}$ of full support,
    where $x_1 \to x_2 \to \ldots \to x_N \to x_1$ is one of the cycles $C_k$ with $N = n_k$,
    $q_{N,i} = x_{N}(x_{1}\ldots x_{i})(x_{N-1}\ldots x_{i+1})$ and
    $i\in\{0,\ldots, N-2\}$. Here we assume that $q_{N,0} = x_{N}x_{N-1}\ldots x_{1}$.
\end{obs}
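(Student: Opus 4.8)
The strategy is to analyze the consequences of $w^m$ being reduced for all $m \geq 1$, extracting strong combinatorial constraints on $w$ from the reduction rules (i)–(iii) of Theorem~\ref{basis}. First I would observe that since $A_\Theta$ has finite GK-dimension, $\Theta$ contains no two cycles joined by an oriented path (Theorem~\ref{gkzor}); in particular, once we show $\supp(w)$ touches a cycle it must be confined to that single cycle's vertices. I would begin by showing $\supp(w)$ cannot contain a non-cycle vertex: if some $t \in \supp(w)$ lies on no cycle, then because $\Theta$ is finite the vertex $t$ is not part of any oriented loop through $\supp(w)$, and iterating the word $w$ forces repeated letters that reduction (i) or (ii) must eventually eliminate. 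More precisely, for a vertex $t$ minimal (in the relevant ordering) among those that obstruct, some power $w^m$ acquires a factor $twt'$ or $t'wt$ matching the left side of a reduction, contradicting reducedness. This pins $\supp(w)$ inside the vertex set of one simple cycle $C_k$, say $x_1 \to x_2 \to \cdots \to x_N \to x_1$.

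Next I would argue that $\supp(w)$ is in fact \emph{all} of $\{x_1,\ldots,x_N\}$ (full support). If some cycle vertex $x_j$ is missing from $\supp(w)$, then the induced subgraph on $\supp(w)$ is an oriented path (a broken cycle), which is acyclic; the same iteration-plus-reduction argument as above then shows $w^m$ cannot stay reduced for large $m$, since on an acyclic support every letter is a "sink relative to what follows it eventually," triggering reductions (i)/(ii) or the commutation-type reduction (iii). So $w$ uses every vertex of the cycle $C_k$.

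With full support on a single cycle established, the problem reduces to identifying which reduced words on a pure oriented $N$-cycle can have all powers reduced. Here I would invoke the description from \cite{wo} of almost all elements of the monoid associated to an oriented cycle, together with a direct check against the reductions $T$ specialized to the cycle: a word all of whose powers are reduced must be "periodic-compatible," i.e. a factor of $u^\infty$ for some primitive reduced $u$ whose all powers are reduced. The candidates $q_{N,i} = x_N(x_1\cdots x_i)(x_{N-1}\cdots x_{i+1})$ for $i \in \{0,\ldots,N-2\}$ are exactly the primitive reduced cyclic words of full support surviving all reductions; I would verify (a) each $(q_{N,i})^m$ is reduced — a finite check that no factor matches rules (i)–(iii), using that in $q_{N,i}$ each letter's next occurrence is "blocked" by an intervening arrow — and (b) conversely, any primitive $u$ with all powers reduced and full support on the cycle is cyclically a rotation matching one of the $q_{N,i}$, by sorting the descents of $u$ against the deg-lex order and the arrow structure. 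Then $w$, being a factor of $u^\infty$, is a factor of some $(q_{N,i})^\infty$, as claimed.

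The main obstacle I anticipate is step (b): the reverse direction, showing the list $\{q_{N,i}\}$ is \emph{exhaustive}. Ruling out non-cycle and missing-cycle vertices is a fairly mechanical "acyclic support forces a reduction" argument, but classifying all primitive words on the full cycle whose powers stay reduced requires carefully tracking how reduction (iii) (the commutation/reordering rule governed by the deg-lex order $x_{p,r} < x_{q,r}$ for $p<q$) interacts with rules (i) and (ii) across the period boundary of $u^\infty$. This is where leaning on the normal-form description in \cite{wo} for the single-cycle case is essential: it should supply the combinatorial skeleton, and the work is to match that skeleton to the explicit formula for $q_{N,i}$ and to confirm the role of the parameter $i$ as recording where the "long descent" in the period sits.
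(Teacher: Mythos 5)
Your overall skeleton (no non-cycle vertices in $\supp(w)$, confinement to one cycle, full support, then the single-cycle results of \cite{wo}) is the same as the paper's, but two steps are genuinely gapped. First, you claim that once $\supp(w)$ meets a cycle it is ``in particular'' confined to that cycle, as a direct consequence of Theorem~\ref{gkzor}. That does not follow: the no-connected-cycles condition allows $\supp(w)$ to meet several pairwise unconnected cycles, whose generators simply commute. Ruling this out is exactly where the specific deg-lex order assumed in the statement enters: since vertices of different cycles commute and are ordered block-by-block, a reduced word has its letters grouped by cycle in increasing order (via reduction (iii) of Theorem~\ref{basis}), so $w=w_1\cdots w_q$ with $\supp(w_p)\s V(C_{i_p})$; then in $w^2$ the block from the largest cycle precedes the block from the smallest one, which is not reduced unless $q=1$. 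Your proposal never uses the order hypothesis and omits this argument. Second, your elimination of non-cycle vertices asserts that a repeated non-cycle letter ``must eventually'' be removed by reduction (i) or (ii); but mere repetition never suffices (cycle letters repeat arbitrarily often in reduced words), and a factor $tvt$ only reduces when nothing in $v$ points into $t$, resp.\ $t$ points at nothing in $v$. The paper's actual mechanism is to walk from the non-cycle vertex along in-neighbours and out-neighbours inside $\supp(w)$ until each side ends at a cycle vertex or a terminal vertex; both ends cannot be cycle vertices (that would connect two cycles by a path through a non-cycle vertex, contradicting Theorem~\ref{gkzor}), and a sink or source can occur at most once in a reduced word while $w^2$ contains it twice. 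Your ``minimal obstructing vertex'' remark gestures at this but does not supply the device that makes the contradiction work.

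On the endgame, your plan to classify all primitive full-support words on the cycle whose powers stay reduced (your step (b), which you yourself flag as the main obstacle, to be settled by ``sorting descents'') is not carried out, and it is more than the statement needs. The paper avoids any such classification: by Proposition~2.14 of \cite{wo}, some power $w^m$ has the form $aq_{N,i}^k b$ with $a,b$ from explicit finite families; then $w^{2m}=aq_{N,i}^kbaq_{N,i}^kb$ is reduced and contains the factor $q_{N,i}$, so Theorem~2.1 of \cite{wo} forces $ba\in\{1,q_{N,i}\}$, whence $w^m$ (and so $w$) is a factor of $(q_{N,i})^{\infty}$. If you intend to lean on \cite{wo}, this is the precise way to do it; as written, the hardest part of your argument remains an acknowledged placeholder.
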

\begin{proof}
Let $w\neq 1$ be such that the word $w^m$ is reduced for every
$m\geqslant1$. Suppose that $y\in\supp(w)$ is a non-cycle vertex
of $\Theta$. First, we will show that then the support of $w$
would also contain either a source or sink vertex. If $y$ is not a
terminal vertex, from conditions (i) and (ii) in Theorem
\ref{basis}, it follows that there exist $u_1,z_1\in V(\Theta)$,
$u_1\neq z_1$, such that $u_1\rightarrow y$, $z_1\leftarrow y$ in
$\Theta$ and $u_1,z_1\in\supp(w)$. Similarly, if $u_1$ is not a
sink vertex, then there exists $u_2\in\supp(w)$ such that
$u_2\rightarrow u_1$. Symmetrically, if $z_1$ is not a source
vertex, then $z_2\leftarrow z_1$ in $\Theta$ for some
$z_2\in\supp(w)$. Moreover $\{u_1, u_2\}\cap\{z_1,
z_2\}=\emptyset$, because $y$ is a non-cycle vertex and
$z_2\notin\{y, z_1\}$, $u_2\notin\{y, u_1\}$. We continue this
procedure until at least one of the chosen vertices is either
terminal or cycle vertex. As the graph is finite, after finitely
many steps we obtain a path $ u_s\rightarrow \cdots\rightarrow
u_1\rightarrow y \rightarrow z_1 \rightarrow \cdots \rightarrow
z_r$ such that $u_1,\ldots, u_s,z_1, \ldots, z_r\in\supp(w)$ and
either $u_s$ is a cycle vertex, or a source vertex and, similarly,
either $z_r$ is a cycle vertex, or a sink vertex. From
Theorem~\ref{gkzor} and the assumption that $A_{\Theta}$ is of
finite Gelfand--Kirillov dimension, the graph $\Theta$ does not
contain two cycles connected by a path and thus it follows that
$u_s$ and $z_r$ cannot be both cycle vertices. Therefore, either
$u_s$ is a source or $z_r$ is a sink, as claimed. However,
according to Theorem \ref{basis} a sink or source vertex may occur
in a reduced word at most once. Since $w^2$ is reduced and
contains at least two occurrences of $u_s$ and $z_r$, they cannot
be terminal vertices, which leads to a contradiction.

 \noindent We have proved that the entire support of $w$ consists
of cycle generators. Call these cycles $C_{1}, \ldots, C_{q}$.
Since the Gelfand--Kirillov dimension of $A_{\Theta}$ is finite,
no vertex can belong to two cycles and if two elements in the
support of $w$ belong to different cycles, they are not connected
in $\Theta$ by an oriented path. From Theorem \ref{basis} and from
the assumed deg-lex order on $F$ it follows that $w = w_1w_2\ldots
w_q$, where $\supp(w_p) \subseteq V(C_{i_p})$ for pairwise
different cycles $C_{i_p}$ for $p=1,\ldots, q$. Yet, as $w^m$ is
reduced, for all $m \geq 1$ it easily follows that $q = 1$, so the
support of $w$ belongs entirely to a single cycle. Say that this
cycle $C$ is of the form $x_1 \to x_2 \to \ldots \to x_N \to x_1$.
Suppose that there exists $x_i$ which is not in the support of
$w$. Take an index $i$ such that $x_{i}\notin\supp(w)$ but
$x_{i-1}\in \supp(w)$, where for $i=1$ we take $i-1=N$. Then $w^2$
contains a factor of the form $x_{i-1}ux_{i-1}$ such that
$x_{i}\notin\supp(u)$. From the description of the Gr\"obner basis
in Theorem \ref{basis} it follows that then $w^2$ is not reduced.
This means that $\supp(w)=\{x_1,\ldots, x_N\}$. From \cite{wo},
Proposition~2.14 it follows that if $w^n$ is reduced for every
$n\geqslant 1$, then for some $m\geqslant 1$ the word $w^m$ is of
the form $aq_{N,i}^k b$, where $i\in\{0,\ldots, N-2\}$,
$k\geqslant 1$ and $a$ and $b$ are members of an explicitly
described finite families of words. Then, from the assumption,
$w^{2m}$ has the reduced form $aq_{N,i}^k b aq_{N,i}^k b$. In
particular, this word has a factor $q_{N,i}$ and therefore, from
Theorem~2.1 in~\cite{wo}, it follows that $ba$ is either of the
form $q_{N,i}$ or the trivial word $1$. Consequently, as $w$ is a
prefix and suffix of $w^m = aq_{N,i}^k b$, it is also a factor of
the infinite word of the form $(q_{N,i})^{\infty}$ for some
$i\in\{0,\ldots, N-2\}$. The assertion holds.
\end{proof}

\section{The main result}

Consider an oriented graph $\Theta$ which does not contain two different cycles connected by an oriented path. Then, by Theorem~\ref{gkzor}, the corresponding Hecke--Kiselman algebra $A_{\Theta}=k[\HK_{\Theta}]$ is of finite Gelfand--Kirillov dimension.

If the graph does not contain any oriented cycle, then the corresponding Hecke--Kiselman monoid is finite, see for example \cite{ara}. In this case the Gelfand--Kirillov dimension of the monoid algebra $A_{\Theta}$ is~$0$.

Thus in the present section we assume that $\Theta$ has at least one cycle. Denote the simple cycles of the graph by $C_{1},\ldots, C_{k}$ for some $k\geqslant 1$ and assume that the cycle $C_j$ is of the length $i_j\geqslant 3$ for $j = 1,\ldots, k$.

Let $\Theta'$ be the full subgraph of $\Theta$ whose set of vertices is built from all cycle vertices and all vertices connected with at least one cycle by an oriented path. We will call such a subgraph $\Theta'$ the maximal cycle--reachable subgraph of $\Theta$.

In particular, for any vertex $x\in V(\Theta')$ that is not contained in any cycle, if there exists a path from $x$ to a cycle (from a cycle to $x$, respectively), then all paths between $x$ and all cycles are from $x$ to the cycles (from the cycles to $x$, respectively), as otherwise $\Theta$ would contain two different cycles connected by an oriented path.

Consider any degree-lexicographic order in the free monoid generated by the vertices of $\Theta$. Recall that by reduced words we mean the words that are in the normal form with respect to the set of reductions from Theorem~\ref{basis}.

We start with the estimation of the number of occurrences of
certain non-cyclic vertices in the reduced words of Hecke--Kiselman
monoids. We agree that for any vertex $x$ there exists
exactly one path of length $0$ with the end (or beginning) in $x$.

\begin{lemma}\label{oszacowanie}
    Let $\Theta$ be a finite simple oriented graph with cycles denoted by $C_{1},\ldots, C_{k}$,
    and let $\Theta'$ be its maximal cycle-reachable subgraph. For
    every vertex $x\in V(\Theta')\setminus (V(C_{1})\cup \ldots \cup V(C_{k}))$ either all oriented paths between $x$ and any cycle lead from $x$ into cycles or all lead from cycles into $x$. Denote
    by $k_{x}$ the number of oriented paths in $\Theta$ of non-negative length with the
    end in the vertex $x$ in the first case, and the number of oriented
    paths of non-negative length with the beginning in $x$ in the latter case. Then, in every
    reduced word in $\HK_{\Theta}$, the element $x$ occurs at
    most $k_x$ times.
\end{lemma}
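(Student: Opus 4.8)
\emph{Proof strategy.} Fix such a vertex $x$. I will treat the first case, in which every oriented path between $x$ and a cycle leads from $x$ into a cycle, so that $x$ reaches some cycle and no cycle reaches $x$; the second case is entirely analogous, with reduction (ii) of Theorem~\ref{basis} replacing (i) and the out-neighbours of $x$ replacing its in-neighbours. Write $\mathrm{In}(x)=\{y\in V(\Theta):y\to x\}$. The argument will be an induction on the number of vertices of $\Theta$ from which $x$ is reachable by an oriented path, with base case $x$ a source, where $k_x=1$.

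First I would set up the structure that makes the induction run. Since $\Theta$ has no two distinct cycles joined by an oriented path, $x$ lies on no oriented cycle, and since $\Theta$ has no loops or $2$-cycles this forces that no oriented closed walk passes through $x$; hence no oriented path ending at a vertex of $\mathrm{In}(x)$ can pass through $x$, and the set of vertices from which $x$ is reachable induces a finite acyclic subgraph of $\Theta$. From this one checks that each $y\in\mathrm{In}(x)$ again lies in $V(\Theta')\setminus(V(C_1)\cup\dots\cup V(C_k))$ and falls under the first case of the lemma, and is reached by strictly fewer vertices than $x$; so the inductive hypothesis applies to $y$ and gives $|u|_y\le k_y$ for every reduced word $u$. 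Finally, appending the edge $y\to x$ to an oriented path ending at $y$ (which avoids $x$, as just observed) yields an oriented path ending at $x$, and counting these together with the length-$0$ path gives $k_x\ge 1+\sum_{y\in\mathrm{In}(x)}k_y$.

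The heart of the matter is the following local observation: if $u$ is reduced and $xvx$ is a factor of $u$ with $|v|_x=0$, then $\supp(v)\cap\mathrm{In}(x)\neq\emptyset$ --- indeed, otherwise $v\nrightarrow x$, so $(xvx,\,xv)$ is a reduction of type (i) in Theorem~\ref{basis} and $u$ is not reduced. Granting this, consider any reduced word $u$ in which $x$ occurs, say at positions $p_1<\dots<p_r$. For $1\le j<r$ the portion of $u$ strictly between $p_j$ and $p_{j+1}$ contains no $x$, hence by the local observation contains a letter lying in $\mathrm{In}(x)$; choosing one such position in each of these $r-1$ pairwise disjoint intervals produces $r-1$ distinct positions of $u$ occupied by vertices of $\mathrm{In}(x)$. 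Consequently $r-1\le\sum_{y\in\mathrm{In}(x)}|u|_y\le\sum_{y\in\mathrm{In}(x)}k_y\le k_x-1$, that is $|u|_x=r\le k_x$; and in the base case $\mathrm{In}(x)=\emptyset$, no such interval can exist, so $r\le 1=k_x$.

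I expect the main obstacle to be the bookkeeping in the second paragraph: verifying that the in-neighbours of a first-case vertex are themselves first-case non-cycle vertices of $\Theta'$ with a strictly smaller set of ancestors, and that no oriented path ending at an in-neighbour revisits $x$. This is precisely the point at which the hypothesis that $\Theta$ has no two cycles joined by an oriented path (equivalently, $\GKdim(A_\Theta)<\infty$) is essential; once it is in place, the local reduction observation and the disjoint-intervals counting in the last paragraph are routine.
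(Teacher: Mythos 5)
Your proposal is correct and follows essentially the same argument as the paper: the Gr\"obner-basis reduction of type (i)/(ii) forces a neighbour of $x$ between any two consecutive occurrences of $x$, and an induction along the acyclic part of the graph combined with the recursion $k_x = 1+\sum_{y} k_y$ over the relevant neighbours yields the bound. The only (inessential) differences are that you treat the mirror case of the one the paper writes out and induct on the number of ancestors rather than on the maximal length of a path from (or to) $x$.
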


\begin{proof} Let $x$ be any vertex contained in the maximal
    cycle--reachable subgraph $\Theta'$ of the graph $\Theta$ but not
    contained in the cycles $C_{1},\ldots, C_{k}$. Assume first that there are oriented paths from the cycles into $x$. To prove the statement
    we proceed by induction on the maximal length $l(x)$ of a path
    starting at $x$ in the graph $\Theta$.

    If $l(x)=0$ then $x$ is a sink vertex in the graph $\Theta$ and
    thus there are no edges starting at $x$. Then for any
    $w\in\HK_{\Theta}$ we have $xwx=wx$ (see condition (ii) in
        Theorem~\ref{basis}) and thus $x$ can occur at most once in any
    reduced word.

    Assume now that $l(x)>0$ for some $x\in V(\Theta')\setminus
        (V(C_{1})\cup \ldots\cup V(C_{k}))$ and let $z_1,\ldots, z_m$ be the set of all
    vertices in $\Theta$ such that there is an edge $x\rightarrow z_i$
    for every $i=1,\ldots, m$. Then from the definition of the maximal
    cycle--reachable subgraph it follows that all $z_1, \ldots, z_m$
    are also in $\Theta'$. Moreover, for $i=1,\ldots, m$ we have
    $l(z_i) < l(x)$. By the inductive hypothesis every $z_i$ occurs in
    any reduced word at most $k_{z_i}$ times, where
    $k_{z_i}$ is number of paths starting at $z_i$. We know that
    if a word of the form $xwx$ with $|w|_x=0$ is reduced in $\HK_{\Theta}$ then in
    particular $x\rightarrow y$ for some $y\in\supp(w)$, as
    otherwise $x \nrightarrow w $ and $xwx=wx$ in $\HK_{\Theta}$. It
    follows that at least one of $z_1,\ldots, z_m$ occurs between any
    two generators $x$. As already explained, every $z_i$ occurs in any reduced
    word at most $k_{z_i}$ times. Therefore $x$ can occur at most
    $k_{z_1}+\ldots + k_{z_m} + 1$ times in any reduced
    word. On the other hand, in $\Theta$ there is exactly one path of length $0$ starting at $x$. Every
    other path starting from $x$ uniquely determines a path
        starting from one of $z_1,\ldots, z_m$ and every path $p$
    starting at $z_i$ defines a path starting with $x\rightarrow
        z_i$ and followed by $p$. Thus, in total there are exactly
    $k_{z_1}+\ldots + k_{z_m} + 1$ paths starting from $x$ in
    the graph $\Theta$. The assertion follows.

    The case where there exist paths from $x$ to a cycle can be
        treated by a symmetric argument, using induction on the maximal
    length of a path that ends in $x$.

\end{proof}

Note that for every non-cyclic vertex $x$ in the maximal cycle--reachable
subgraph $\Theta'$ such that all paths between $x$ and the cycles
lead from the cycles into $x$ (from $x$ into the cycles,
respectively) the number $k_x$ of all paths in $\Theta$
starting (ending, respectively) at $x$ is the same as the
number of such paths in $\Theta'$.

Our next step is to use Lemma~\ref{oszacowanie} to show that every regular expression of the form $w_{1}^*v_1w_{2}^*\ldots
v_{s-1}w_{s}^*$ which describes reduced words in the algebra $A_{\Theta}$
can be expressed using at most certain number of stars. To do so we need to introduce certain order in the set of vertices of $\Theta$. For the rest of the present section we will assume that such an order had been chosen.

\begin{defi}[Order on vertices of the graph]\label{order}
    Let $\Theta$ be a graph with the cycles $C_{1}, \ldots C_{k}$ of length $n(j)\geqslant 3$ for $j=1,\ldots, k$.
    Denote by $\Theta'$ the maximal cycle--reachable
    subgraph of $\Theta$. As already explained, for any vertex $x$ of $\Theta'$ that is
    not contained in
    any cycle, all oriented paths between $x$ and any of the cycles starts either
    from $x$ or all go to $x$. For every such a vertex denote by $k_x$ the number of oriented paths of length $\geqslant 0$ in $\Theta$ with either the end or the beginning in $x$, depending on the direction of paths between $x$ and the cycles. In the
    set of these vertices define any order such that if $k_x < k_y$
    holds, then $y < x$.

    Let $C_j$ be of the form $x_{1,j}\rightarrow\cdots \rightarrow x_{n(j), j}\rightarrow x_{1,j}$ for some $n(j)\geqslant 3$ and $j=1,\ldots, k$.
    In the set of all cycle vertices introduce the order such that $x_{i,j} < x_{l, m}$
    if ether $j < m$ or $j=m$ and $i<l$. Moreover, assume that all cycle vertices
    are smaller than any vertex outside the cycles.

    Finally, choose any order in the set of vertices of $\Theta$ that
    are not in $\Theta'$, for example such that all these vertices are
    bigger than the vertices of $\Theta'$.
\end{defi}

Let us note that it is possible to define the order which
satisfies all above conditions provided that the graph $\Theta$
does not contain two different cycles connected by an oriented
path.

In the next lemma we describe the possible form of a family of
reduced words described by $w^* v w^*$, with $\supp(w)\s V(C_n)$
for some $n$.

\begin{lemma}\label{non--cyclic}
If a family of reduced words is described by a regular expression
of the form $u^{*}vw^{*}$ with $\supp(u),\supp(w)\s V(C_n)$ for a
cycle $C_n$, then either $v$ contains a vertex connected by
an edge with $C_n$ or this family of words can be expressed by a
sum of finitely many regular expressions of the form $p r^{*}q$ or
$p$, for some words $p, q$ and~$r$.

\end{lemma}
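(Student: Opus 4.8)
The plan is to analyze the regular expression $u^*vw^*$ under the assumption that $v$ contains no vertex connected by an edge with $C_n$, and show that under this assumption the stars $u^*$ and $w^*$ cannot both genuinely contribute, so the family collapses to the simpler form $pr^*q$ or $p$. First I would apply Observation~\ref{supp}: since $u^m$ and $w^m$ must be reduced for all $m\geqslant 1$ (as they occur as factors of reduced words in the family), both $u$ and $w$ are factors of infinite words of the form $q_{N,i}^{\infty}$ and $q_{N,j}^{\infty}$ respectively, with $N = n(n)$ the length of $C_n$ and $i,j\in\{0,\ldots,N-2\}$. In particular $\supp(u) = \supp(w) = V(C_n)$, so both $u$ and $w$ have full support on the cycle. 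The key structural input is then Theorem~\ref{basis} together with the description of reduced words over a single cycle from~\cite{wo}: I would argue that if $v$ is ``insulated'' from $C_n$ (no edge between $\supp(v)$ and $V(C_n)$), then for a word $u^a v w^b$ to be reduced, the cycle letters appearing in $u^a$ and in $w^b$ must interact through the reductions (iii) of Theorem~\ref{basis}, commuting past the insulated block $v$, or else one of the two cyclic blocks must be bounded in length.

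The second step is to make the ``bounded'' alternative precise. Suppose, toward a contradiction, that both $u\neq 1$ and $w\neq 1$. Since $u$ and $w$ both have full cyclic support and $v$ is insulated, I would show that in the word $u^a v w^b$, every letter $t\in\supp(v)$ satisfies $t \nleftrightarrow x$ for every cycle vertex $x$, hence reduction (iii) applies to move cycle vertices across $v$ in one direction (according to the fixed order of Definition~\ref{order}, in which all cycle vertices are smaller than the non-cycle vertices of $\supp(v)$, and all cycle vertices precede non-cycle vertices). This means any cycle vertex appearing to the right of $v$ that is smaller than something in $\supp(v)$ — which is all of them — would be pulled left past $v$ by a reduction of type (iii), unless it is ``blocked'' by an equal cycle vertex already sitting just left of $v$. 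I would leverage this to conclude that only a bounded prefix of $w^b$ — of length at most $|w|$ or so, independent of $a$ and $b$ — can survive to the right of (the reduced form of) $v$; everything else gets absorbed into the left cyclic block, merging $u^a$ and the migrated part of $w^b$ into a single power of a cyclic word $r$. Collecting the bounded leftover pieces into finitely many choices of $p$ and $q$, and using that the merged cyclic part is again a power of a word on $V(C_n)$ (by Observation~\ref{supp} and Theorem~2.1 of~\cite{wo}, which forces compatible cyclic words to be powers of a common $q_{N,i}$), yields the claimed finite union of expressions $pr^*q$.

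The main obstacle I anticipate is the bookkeeping in the second step: precisely controlling which cycle letters migrate past $v$ and which are blocked, and showing the leftover on the right side is uniformly bounded regardless of the exponents $a,b$. This requires a careful case analysis using the exact shape $q_{N,i} = x_N(x_1\cdots x_i)(x_{N-1}\cdots x_{i+1})$ from Observation~\ref{supp} — in particular understanding how a reduced power $q_{N,i}^k$ begins and ends, and how a factor of $q_{N,i}^\infty$ sits inside it — together with the reductions of type (i) and (ii) that can collapse repeated cycle vertices once they are brought adjacent across $v$. A secondary subtlety is handling the degenerate cases where $u = 1$ or $w = 1$ (then the expression is already of the form $pr^*q$ or $p$ directly), and the case where $\supp(v)$ meets $V(C_n)$ but contains no vertex \emph{joined by an edge} to the cycle — here one must check that the $v$ still behaves as an insulated block with respect to the relevant reductions, or else $v$ contains a cycle vertex and the first alternative of the lemma holds trivially since that vertex is connected (by a cycle edge) to $C_n$.
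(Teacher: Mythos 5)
Your proposal leaves the one case where the lemma actually has content unproved. You dispose of ``$v$ contains a vertex of $C_n$'' by declaring the first alternative trivially satisfied via a cycle edge, and you treat $u=1$ or $w=1$ as degenerate; but the substantive case is $\supp(v)\s V(C_n)$ with $u,w\neq 1$ --- in particular $v=1$, where the expression is $u^*w^*$ and no vertex of any kind sits between the two starred blocks. There your contradiction strategy cannot apply, because all words $u^a v w^b$ can genuinely be reduced (e.g.\ $u=w=q_{N,i}$), so one must actually prove the collapse to finitely many one-star expressions. The paper does this by writing $u=aq_{N,l}^{\alpha_1}b$, $v=aq_{N,l}^{\beta}b'$, $w=a'q_{N,l}^{\alpha_2}b'$ with $ba,\,b'a'\in\{1,q_{N,l}\}$, so that $u^{l_1}vw^{l_2}=aq_{N,l}^{\,l_1\beta_1+l_2\beta_2+\beta_3}b'$, and then invoking Proposition~2.2 of \cite{num_semigroup}: the exponent set $\{l_1\beta_1+l_2\beta_2+\beta_3\}$ is, up to a finite set, an arithmetic progression with difference $\gcd(\beta_1,\beta_2)$, whence a finite union of expressions $pr^*q$ or $p$. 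Nothing in your outline supplies this step; your closing appeal to ``powers of a common $q_{N,i}$'' still leaves a two-parameter family of exponents, which is not of the form $pr^*q$ without the numerical-semigroup fact. Note also that reading a cycle vertex inside $v$ as fulfilling the first alternative is incompatible with how the lemma is used in Corollary~\ref{dim}: the vertex produced must lie in $\mathcal{A}_n$, i.e.\ be a non-cycle vertex adjacent to $C_n$, so that Lemma~\ref{oszacowanie} can bound its number of occurrences; with your reading the star-counting argument would break.

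The treatment of the ``insulated $v$'' configuration is also off target. Since by hypothesis every word $u^avw^b$ of the family is reduced, the moment you exhibit a factor $t_1v't_2$ with $t_1>t_2$ and $t_2\nleftrightarrow t_1v'$ (take $t_1=z$ the last non-cycle letter of $v$ and $t_2$ the first $C_n$-letter to its right; the letters in between belong to other cycles and are therefore disconnected from $C_n$) you already have a contradiction with Theorem~\ref{basis}: such a family of reduced words does not exist, which is exactly the paper's short argument forcing $v$ to contain a vertex adjacent to $C_n$. Your plan to instead ``migrate'' cycle letters of $w^b$ across $v$ and ``absorb'' them into the left block, with bookkeeping about bounded leftovers and a merged power, is not a legitimate move: the words of the family are fixed and reduced, and rewriting them describes a different family; moreover it aims at the wrong alternative for this configuration. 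So the part of your argument that works proves only a vacuity, while the collapse statement that the corollary actually needs (two starred blocks on the same cycle separated by nothing, or only by $C_n$-letters) is missing.
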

\begin{proof}
Let $u^{*}vw^{*}$ be the regular expression describing reduced
words with $\supp(u),\supp(w)\s V(C_n)$ for a cycle $C_n$. First
we claim that either $\supp(v)\s V(C_n)$ or $v$ contains a
non-cycle vertex. Indeed, by Definition~\ref{order} of the order
on the vertices of $\Theta$ and the fact that the graph does not
contain two different cycles connected by an oriented path,
generators corresponding to the vertices from different cycles
commute. Consequently, every reduced word $w$ such that
$\supp(w)\s V(C_{1})\cup\cdots\cup V(C_{k})$ has elements from
different cycles grouped in such a way that if $w=w_1\cdots w_j$
with $w_i\in V(C_{n(i)})$ and $w_l\in V(C_{n(l)})$, then
$n(i)\leqslant n(l)$ for all $i<l$. Thus if the family of words
$u^*v w^*$ such that $\supp(u), \supp(w)\s V(C_{n})$ is reduced,
then either $\supp(v)\s V(C_n)$ or $v$ contains a non-cycle
generator.

 Let us now consider the first case, that is $u^*v w^*$
consists of reduced words and $\supp(u)$, $\supp(v)$, $\supp(w)\s
V(C_{n})$. We proceed to show that then $u^* v w^*$ can be
expressed as a finite sum of expressions with at most one Kleene
star. From the reasoning as in the proof of Observation~\ref{supp}
it follows that $u, v$ and $w$ are all factors of the infinite
word $(x_{N}(x_{1}\ldots x_{l})(x_{N-1}\ldots x_{l+1}))^{\infty}$,
denoted shortly by $q_{N,l}^{\infty}$, for some $l\in\{0,\ldots,
N-2\}$, where $N$ depends on $n$. Moreover we can write $u = a
q_{N,l}^{\alpha_1}b$, $v = a q_{N,l}^{\beta}b'$ and $u = a'
q_{N,l}^{\alpha_2}b'$ for non-negative $\alpha_i$, $\beta$ and
words $a, a'$, $b, b'$ that are suffixes and prefixes of the word
$q_{N,l}$, respectively, of length at most $N-1$. Thus both $ba$
and $b'a'$ are either the trivial word $1$ or are of the form
$q_{N,l}$. Then $u^*vw^*$ is equal to the set $\{ a q_{N,l}^{l_1
\beta_1 + l_2 \beta_2 + \beta_3} b' : l_1, l_2\geqslant 0\}$, for
some positive integers $\beta_i$ ($i=1,2,3$), where $\beta_1 =
\alpha_1$ if $ba=1$ and $\beta_1 = \alpha_1 + 1$ otherwise, and
$\beta_2 = \alpha_2$ if $b'a'=1$ and $\beta_2= \alpha_2 + 1$
otherwise. From Proposition~2.2 in \cite{num_semigroup} it follows
that there exist a positive integer $n_0$ and a finite set $F$
such that $\{ l_1 \beta_1 + l_2 \beta_2 + \beta_3\} = \{ n_0 + k d
: k\geqslant 0 \}\cup F $, where $d= \grecd{(\beta_1, \beta_2)}$.
We thus get easily that $u^* v w^*$ can be written as a finite sum
of regular expressions with at most one star~$*$.

Now assume that a family of reduced words described by $u^* v w^*$
is such that $\supp(u), \supp(w)\s V(C_n)$ and $v$ contains a
non-cycle vertex. We can write $v = v_{s} z v_{c}$ for words
$v_{s}, v_{c}$ and a non-cycle vertex $z$ such that
$\supp(v_{c})\s \bigcup\limits_{j=1}^{k} V(C_j)$. Suppose that $z$
is not connected by an edge with a cycle $C_n$. Consider the first
occurrence of a vertex $x$ such that $x\in V(C_n)$ in the
word $v_cw$. Then the word $vw$ contains a factor of the
form $zv'x$ with $\supp(v_{c})\s \bigcup\limits_{j\neq n}V(C_j)$.
Furthermore, $x < z$ and $zv'\nleftrightarrow x$. Consequently,
$vw$ contains a factor which can be reduced using reduction (iii)
from Theorem~\ref{basis}. The obtained contradiction shows that
for every family of reduced words of the form $u^* v w^*$ with
$\supp(u), \supp(w)\s V(C_n)$ and $\supp(v)\nsubseteq V(C_n)$, for
a cycle $C_n$, factor $v$ contains at least one vertex connected
by an edge with $C_n$. Thus, the result follows.
\end{proof}

\begin{coll}\label{dim}
    If $\Theta$ is an oriented graph with the cycles $C_{1}, \ldots, C_{k}$ such that the corresponding Hecke--Kiselman algebra has finite Gelfand--Kirillov dimension then $$\GKdim
    A_{\Theta} \leqslant \sum_{j=1}^{k}\left(\sum_{x\in \mathcal{A}_{j}} k_x + 1\right),$$ where
    $\mathcal{A}_{j}$ consists of all vertices of $\Theta$ that are
    connected by an edge with the cycle $C_{j}$ for $j=1,\ldots, k$.

\end{coll}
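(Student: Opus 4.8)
Set $M=\sum_{j=1}^{k}\bigl(\sum_{x\in\mathcal{A}_j}k_x+1\bigr)$ and fix on the vertices of $\Theta$ the order from Definition~\ref{order}. Since $\GKdim A_\Theta$ is finite it is an integer by Theorem~\ref{autom}, so Theorem~\ref{automata-est} presents $N(A_\Theta)$ as a finite union of regular expressions of the form~\eqref{dlugosc}; by the same theorem it is enough to show that each such expression $E=v_0w_{i_1}^{*}v_1\cdots w_{i_s}^{*}v_s$ with language contained in $N(A_\Theta)$ can be rewritten as a finite union of expressions of the form~\eqref{dlugosc}, each using at most $M$ Kleene stars. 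I would prove this by induction on $s$. If $s\le M$ there is nothing to do; a star with $w_{i_j}=1$ may be deleted (merging $v_{j-1}$ with $v_j$); so assume $s>M$ and every $w_{i_j}\neq1$.

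Because the language of $E$ lies in $N(A_\Theta)$ and every factor of a reduced word is reduced, the words $w_{i_j}^{m}$ are reduced for all $m\ge1$, so Observation~\ref{supp} gives $\supp(w_{i_j})\subseteq V(C_{c(j)})$ for a cycle $C_{c(j)}$, unique since the cycles of $\Theta$ are vertex-disjoint (here $\GKdim A_\Theta<\infty$ is used). For a cycle $C$ let $m_C$ be the number of indices $j$ with $C_{c(j)}=C$ and let $j^{C}_1<\cdots<j^{C}_{m_C}$ be their positions, so $s=\sum_C m_C$. If $m_C\le1$ for all $C$ then $s\le k\le M$, contrary to assumption, so some $m_C\ge2$. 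For consecutive positions $j^{C}_t<j^{C}_{t+1}$, put every exponent (strictly between them and elsewhere) equal to $1$ to obtain a word $z=v_{j^{C}_t}w_{i_{j^{C}_t+1}}v_{j^{C}_t+1}\cdots w_{i_{j^{C}_{t+1}-1}}v_{j^{C}_{t+1}-1}$; reading off the factor between the two chosen stars shows $w_{i_{j^{C}_t}}^{*}\,z\,w_{i_{j^{C}_{t+1}}}^{*}$ describes reduced words, with $\supp(w_{i_{j^{C}_t}}),\supp(w_{i_{j^{C}_{t+1}}})\subseteq V(C)$. By Lemma~\ref{non--cyclic}, either $\supp(z)\subseteq V(C)$, or $z$ contains a non-cycle vertex connected by an edge with $C$.

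In the first case, vertex-disjointness of the cycles forces $j^{C}_{t+1}=j^{C}_t+1$ (an intermediate star would put a vertex of another cycle into $z$), hence $z=v_{j^{C}_t}$ and the sub-expression $w_{i_{j^{C}_t}}^{*}v_{j^{C}_t}w_{i_{j^{C}_t+1}}^{*}$ has all supports inside $V(C)$; by Lemma~\ref{non--cyclic} it equals a finite union of expressions $p\,r^{*}q$ or $p$. Substituting this into $E$ and absorbing $p,q$ into the neighbouring $v_i$'s rewrites $E$ as a finite union of expressions of the form~\eqref{dlugosc} with fewer than $s$ stars, and the induction hypothesis finishes the argument. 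If this first case never occurs, then for every cycle $C$ with $m_C\ge2$ and every $t\in\{1,\dots,m_C-1\}$ a non-cycle vertex $x_{C,t}\in\mathcal{A}_C$ occurs in some $v_{\ell(C,t)}$ with $j^{C}_t\le\ell(C,t)<j^{C}_{t+1}$ (it cannot come from an intermediate star, whose support is cyclic). For fixed $C$ the intervals $[j^{C}_t,j^{C}_{t+1})$ are pairwise disjoint, so the indices $\ell(C,t)$ are distinct, and the reduced word $W=v_0w_{i_1}v_1\cdots w_{i_s}v_s$ (all exponents $1$) contains $v_{\ell(C,1)},\dots,v_{\ell(C,m_C-1)}$ as pairwise disjoint factors, each carrying an occurrence of a non-cycle vertex of $\mathcal{A}_C$. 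Hence $\sum_{x}|W|_x\ge m_C-1$, the sum over non-cycle $x\in\mathcal{A}_C$, while Lemma~\ref{oszacowanie} gives $|W|_x\le k_x$ for each such $x$ (they lie in $\Theta'$). Thus $m_C\le\sum_{x\in\mathcal{A}_C}k_x+1$ for every cycle $C$, and summing over cycles gives $s=\sum_C m_C\le M$, contradicting $s>M$. So the first case must occur, the induction closes, and feeding the resulting $\le M$-star representation into Theorem~\ref{automata-est} yields $\GKdim A_\Theta\le M$.

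The delicate step is the middle one: verifying that Lemma~\ref{non--cyclic} may legitimately be applied to the two-exponent sub-family $w_{i_{j^{C}_t}}^{*}\,z\,w_{i_{j^{C}_{t+1}}}^{*}$ (which rests on factors of reduced words being reduced) and then splicing a collapsed sub-expression back into $E$ with correct accounting of the stars. What makes it work without any global reordering of $E$ is that interleaved stars of other cycles are impossible in the collapsing case ($\supp(z)\subseteq V(C)$ plus vertex-disjointness of the cycles), whereas in the non-collapsing case the required vertex of $\mathcal{A}_C$ is produced irrespective of such interleaving, after which Lemma~\ref{oszacowanie} does the counting.
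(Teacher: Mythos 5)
Your proposal is correct and follows essentially the same route as the paper's proof: Observation~\ref{supp} to force each starred word into a single cycle, Lemma~\ref{non--cyclic} to either collapse two adjacent same-cycle stars or produce a separating vertex of $\mathcal{A}_C$, and Lemma~\ref{oszacowanie} to bound how many such separators a reduced word can carry, all fed into Theorem~\ref{automata-est}. The only difference is presentational: you organize the paper's informal ``we can rewrite the family'' step as an explicit induction on the number of stars, with a clean treatment of interleaved stars from other cycles, which is a faithful (and slightly more detailed) rendering of the same argument.
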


\begin{proof}
    It is enough to prove that every family of reduced words in
    $A_{\Theta}$ of the form $$w_{1}^*v_1w_{2}^*\ldots
    v_{s-1}w_{s}^*$$
    can be expressed as a regular expression of the form (\ref{dlugosc}) with $s\leqslant \sum_{j=1}^{k}\sum_{x\in \mathcal{A}_{j}} \left(k_x + 1\right)$.

        From Observation~\ref{supp} for every $n$ we have $\supp(w_n)\s V(C_{j(n)})$,
        for some $j(n)\in\{1,\ldots, k\}$ and $w_n$ are factors of the word
        $(x_{N}(x_{1}\ldots x_{i})(x_{N-1}\ldots x_{i+1}))^{\infty}$ of full support,
        where $x_1 \to x_2 \to \ldots \to x_N \to x_1$ is one the cycles $C_{j}$
        with $N = n(j)$ and $i\in\{0,\ldots, N-2\}$.

      By Lemma~\ref{non--cyclic} we can rewrite the considered family of words in such
      a way that between any two $w_{i}$, $w_{j}$ ($i,j\in\{1,\ldots, s\}$)
      such that $\supp(w_i), \supp(w_j)\s V(C_n)$ for some $n\in\{1,\ldots, k\}$
      there is a non-cycle vertex $z$ which is connected by an edge with $C_n$,
      that is $z\in \mathcal{A}_{n}$.

  By Lemma~\ref{oszacowanie}, all vertices $z$ with this property occur at most
  $\sum_{x\in \mathcal{A}_{n}} k_x$ times in total in any reduced word of $A_{\Theta}$.
  Consequently, in the regular expression of the above form, for every $j=1,\ldots, k$,
  factors of the form $w^{*}$ with $\supp(w)\s V(C_{j})$ occur at most
  $\sum_{x\in \mathcal{A}_{j}} k_x +1$ times.

  Because, as already explained, any family of reduced words in
  $A_{\Theta}$ of the form $w_{1}^*v_1w_{2}^*\ldots
  v_{s-1}w_{s}^*$ can be rewritten in such a way that for every $w_i$ we have
  $\supp(w_i)\s V(C_{j})$ for some $j\in\{1,\ldots, k\}$, it follows that
  $s \leqslant \sum_{j=1}^{k}\left(\sum_{x\in \mathcal{A}_{j}} k_x + 1\right)$.

  From Theorem~\ref{automata-est} we know that the set of normal (reduced) words of
  $A_{\Theta}$ is a finite union of regular expressions of the form
  $v_0w_{i_1}^*v_1w_{i_2}^*v_2\ldots v_{s-1}w_{i_s}^*v_{s}$.
  Therefore, from the above reasoning and Theorem~\ref{automata-est} it follows that
  $\GKdim A_{\Theta}\leqslant\sum_{j=1}^{k}\left(\sum_{x\in \mathcal{A}_{j}} k_x + 1\right)$, as claimed.
\end{proof}

Our next step is to construct a family of reduced words of the algebra $A_{\Theta}$ described by a regular expression with exactly $s = \sum_{j=1}^{k}\left(\sum_{x\in \mathcal{A}_{j}} k_x + 1\right)$ stars and such that for different substitutions of stars with positive integers we get different elements. As for every word $w$ we have $w^*w= w^{+}$, we will write $w^{+}$ instead of $w^{*}w$ and we refer to the number of stars in the regular expression even if $+$ is used.

Let us recall that we assume that the set of vertices of $\Theta$ is ordered as in Definition~\ref{order}.

Let $\Theta$ be a graph with cycles $C_{1},\ldots, C_{k}$ of the length
$i_j\geqslant 3$ for $j\in\{1,\ldots, k\}$. Denote by $\Theta'$ the maximal cycle--reachable subgraph of $\Theta$.
We will construct a family of reduced words in $\HK_{\Theta}$ via an insertion process that is described below.

{\bf Step 1.} First we insert subsequent vertices contained in the
cycle--reachable subgraph $\Theta'$ of the graph $\Theta$ that are
not cycle vertices to certain words, starting from the trivial
word $1$. At every step a chosen generator $y$ is inserted at the
beginning of the word and directly after every vertex of the
(previously constructed) word that is connected by an edge with
$y$. Every vertex $y$ occurs exactly $k_y$ times in the
constructed word. Note that at this stage the resulting word is
not necessarily reduced. The procedure is described precisely
as follows.

As $\Theta$ does not contain two different cycles connected by an
oriented path, either there is at least one terminal vertex $y$
with $k_y=1$ or the graph is a disjoint union of cycles
$C_{1},\ldots, C_{k}$. If the latter case holds we set $w'=1$,
where $1$ is a trivial word and go to Step 2.

Now we consider the case when there are some terminal vertices in
$\Theta'$. Note that a vertex $y$ from $\Theta'$ is terminal
exactly if $k_y=1$. Let $y^{(1)}_1 < \ldots < y^{(1)}_{n_1}$ be
the set of all vertices in $\Theta '$ such that $k_{y^{(1)}_i}=1$
and define $$w_1 = y^{(1)}_1 y^{(1)}_2\cdots y^{(1)}_{n_1}.$$

Next, take the biggest (with respect to the order defined in
Definition~\ref{order}) vertex $y^{(2)}\in V(\Theta ')$ that is
not contained in any cycle of the graph and that has not been used
yet in $w_1$. We can assume that all paths between the cycles and
$y^{(2)}$ lead from the cycles into $y^{(2)}$. Otherwise, all such
paths lead from $y^{(2)}$ into the cycles and the reasoning is
symmetric. If for some non-cycle vertex $z\in V(\Theta ')$ we have
$y^{(2)}\rightarrow z$, then $k_{z} < k_{y^{(2)}}$ and thus
$y^{(2)} < z$. By the choice of $y^{(2)}$ it follows that $z\in\{
y^{(1)}_1, \ldots, y^{(1)}_{n_1}\}$. Moreover, there are exactly
$k_{y^{(2)}}-1$ (recall that $k_{y^{(2)}}$ is the number of paths
starting at $z$) generators in $w_1$ that are connected by an edge
with $y^{(2)}$. Let $w_2$ be the word that is formed from $w_1$ by
    inserting the generator $y^{(2)}$ in such a way that it is the first letter of $w_2$
    and $y^{(2)}$ also directly
    follows in $w_2$ every $y^{(1)}_j$ that is connected by an edge
    $y^{(2)}\rightarrow y^{(1)}_j$ with $y^{(2)}$ in $\Theta '$.
    Generator $y^{(2)}$ occurs in $w_2$ exactly $k_{y^{(2)}}$ times.
    Additionally, every generator $z$ used in the word $w_2$ occurs in this word
    exactly $k_{z}$ times.

Similarly, if we have already constructed the word $w_i$ for some
$i>1$, then in the next step we insert to this word several copies
of the largest non-cycle generator $y^{(i+1)}\in V(\Theta ')$
that is not in the support of $w_i$ yet. In the word $w_i$ every
generator $z$ occurs $k_z$ times. We know that every $z$ such that
$y^{(i+1)} < z$ is already in the support of $w_i$. In particular
every generator $z$ for which $k_z < k_{y^{(i+1)}}$ is in $w_i$.
As explained above, we can assume that all directed paths
connecting the cycles and $y^{(i+1)}$ start from the cycles.
Therefore, if we have $y^{(i+1)}\rightarrow p$ in the graph
$\Theta'$, then $p\in \supp(w_{i})$. Define the word $w_{i+1}$ by
inserting $y^{(i+1)}$ to $w_{i}$ at the beginning and also
directly after every generator $z\in\supp(w_i)$ such that
$y^{(i+1)}\rightarrow z$ in $\Theta '$. In such a word
    $w_{i+1}$ the element $y^{(i+1)}$ occurs exactly
    $\sum\limits_{y^{(i+1)}\rightarrow z} k_z + 1$ times.
    Let us note that all paths starting at $y^{(i+1)}$ in the graph
    $\Theta$ are either the path of length $0$ or are uniquely determined
    by a path starting at $z$ for some $z$ such that $y^{(i+1)}\rightarrow z$.
    Consequently, in the word
$w_{i+1}$ the element $y^{(i+1)}$ occurs exactly
$\sum\limits_{y^{(i+1)}\rightarrow z} k_z + 1 = k_{y^{(i+1)}}$
times.

 After finitely many steps as described above we get a word
$w'$ whose support contains every non-cycle generator $z$ of
$\Theta'$ and with the property that every $z\in\supp(w')$ occurs
in $w'$ exactly $k_{z}$ times.

{\bf Step 2.} Now we insert cycle vertices into the word $w'$
constructed in Step 1. The idea relies on a slight modification of
the previous Step. Namely, we insert regular expressions of the
form $w_{0}w^*w_{1}$ with $\supp(w_0),\supp(w_1),\supp(w)\s
V(C_j)$ ($w_0$ and $w_1$ vary depending on the insertion place),
for a cycle $C_j$, at the beginning of the constructed regular
expression and directly after every vertex connected by an edge
with $C_j$. The procedure is repeated for every cycle, starting from
the cycle with the biggest vertices in the sense of ordering from
Definition~\ref{order}. It can be precisely described as follows.

For every cycle $C_{i}$ ($i=1,\ldots, k$) with vertices $x_{1,i},\ldots, x_{n, i}$
for some $n\geqslant 3$ denote by $c_{i}$ the reduced word of the form
$x_{1,i}\cdots x_{n, i}$.

We can write $w'=v_1\cdots v_{m+1}$, where every $v_i$ is the
word of minimal possible length that ends with an element $z_{i}$
connected by an edge with the cycle $C_k$ (possibly with
$v_{m+1}=1$) for $i=1,\ldots, m$. Note that we have $m=\sum_{x\in
\mathcal{A}_{k}} k_x$ if $\mathcal{A}_{k}$ is non-empty and $m=0$
otherwise.

For every vertex $z_i$ connected by an edge with the cycle $C_k$ of length $n$, we may choose $j(i)\in\{1,\ldots, n\}$ such that either $z_{i}\rightarrow x_{j(i), k}$ or $x_{j(i), k}\rightarrow z_{i}$. Then we define the regular expression (that is certain family of words) $r_{k}$ as follows:
\begin{align*}
    c_{k}^{+} (x_{1,k}\ldots x_{j(1)-1,k}) v_1 (x_{j(1),k}\cdots x_{n, k})
    c_{k}^{+}(x_{1,k}\ldots x_{j(2)-1, k}) \cdots \\ \cdots c_{k}^{+}(x_{1, k}\ldots x_{j(m)-1, k})
    v_{m} (x_{j(m), k}\cdots x_{n, k}) c_{k}^{+}v_{m+1}.
\end{align*}
In this expression Kleene star $*$ occurs exactly $m_k =
\sum_{x\in \mathcal{A}_k} k_x + 1 $ times, where $\mathcal{A}_k$
consists of all vertices $x$ that are connected by an edge with
the cycle $C_k$ in $\Theta'$. If $\mathcal{A}_k$ is empty, that is
there are no vertices connected by an edge with the cycle $C_k$
and $w'=v_1$ we define the regular expression $r_1$ as
$c_{k}^{+}v_1$. Then we also assume that $\sum_{x\in
\mathcal{A}_k} k_x = 0$ and thus Kleene star $*$ occurs exactly $1
= \sum_{x\in \mathcal{A}_k} k_x + 1$ times.

Next we repeat this procedure for every cycle of the graph
$\Theta$. More precisely, at every step we rewrite the constructed
regular expression $r_j$ as $v_1\cdots v_{m+1}$, where
$v_1,\ldots, v_{m}$ are regular expressions of minimal possible
length that end with an element $z_{i}$ connected by an edge with
the cycle $C_{j-1}$ (perhaps with $v_{m+1}=1$). If there are no
vertices connected by an edge with $C_{j-1}$, we set $r_j=v_{1}$,
that is $m=0$. Note that we have $m=\sum_{x\in \mathcal{A}_{j-1}}
k_x$, where for empty $\mathcal{A}_{j-1}$ we put $\sum_{x\in
\mathcal{A}_{j-1}} k_x = 0$. For every vertex $z_i$ connected by
an edge with the cycle $C_{j-1}$ of length $n$, we may choose
$j(i)\in\{1,\ldots, n\}$ such that either $z_{i}\rightarrow
x_{j(i), j-1}$ or $x_{j(i), j-1}\rightarrow z_{i}$. Then define
the regular expression $r_{j-1}$ as:
\begin{align}\label{family}
    c_{j-1}^+ (x_{1,j-1}\ldots x_{j(1)-1,j-1}) v_1 (x_{j(1),j-1}\cdots x_{n, j-1})
    c_{j-1}^+(x_{1,j-1}\ldots x_{j(2)-1, j-1}) \cdots \\
    \cdots c_{j-1}^+(x_{1, j-1}\ldots x_{j(m)-1, j-1})
    v_m (x_{j(m), j-1}\cdots x_{n, j}) c_{j-1}^+ v_{m+1}.\nonumber
\end{align}
As before, if $\mathcal{A}_{j-1}$ is empty, we set $r_{j-1} =
c_{j-1}^{+}r_j$. Then expression $r_{j-1}$ contains exactly
$m_{j-1} = m_j + \sum_{x\in \mathcal{A}_{j-1}} k_x + 1$ Kleene
stars.

This way we construct a regular expression $r_1$ that contains exactly $m_1= m_2 +\sum_{x\in\mathcal{A}_{1}} k_x + 1 = \sum_{j=1}^{k}\left(\sum_{x\in \mathcal{A}_{j}} k_x + 1\right) $ stars. We will show that $r_1$, treated as a family of words, consists of reduced words of $\HK_{\Theta}$. This will be crucial to get the lower bound for the Gelfand--Kirillov dimension of the algebra~$A_{\Theta}$.

\begin{lemma}\label{lower-bound}
    Words \eqref{family} are reduced in $A_{\Theta}$ with respect to
    the system introduced in Theorem~\ref{basis}. Consequently,
    $\GKdim A_{\Theta}\geqslant\sum_{j=1}^{k}\left(\sum_{x\in \mathcal{A}_{j}} k_x + 1\right)$.
\end{lemma}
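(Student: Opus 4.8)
The plan is to verify directly that no word appearing in the family \eqref{family} admits any of the three reductions (i), (ii), (iii) from Theorem~\ref{basis}, and then to deduce the stated lower bound by counting. First I would fix notation: a word $W$ in the family $r_1$ is obtained from the word $w'$ of Step~1 by successively inserting, for each cycle $C_j$ (processed in decreasing order), blocks $c_j^+$ together with prefixes $(x_{1,j}\ldots x_{j(i)-1,j})$ and suffixes $(x_{j(i),j}\ldots x_{n,j})$ around the vertices $z_i\in\mathcal A_j$. The key structural facts I would isolate are: (a) the cyclic blocks coming from a single cycle $C_j$, when read in order inside $W$, spell out a power of $c_j=x_{1,j}\cdots x_{n,j}$ broken only at the insertion points, hence their concatenation is a factor of $c_j^\infty = q_{n,0}^\infty$, which by Observation~\ref{supp} and the results of \cite{wo} is reduced; (b) vertices from two different cycles $C_j\neq C_{j'}$ do not interleave in a way that creates a reducible pattern, since by Definition~\ref{order} and the absence of two cycles joined by a path such generators commute and are grouped with the smaller-cycle generators first, exactly the order forced by reduction (iii); (c) each non-cycle vertex $y$ occurs exactly $k_y$ times, precisely with the ``between any two occurrences there is a neighbour'' pattern built in Step~1.

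The main body of the argument is a case analysis ruling out each reduction. For reduction (i), a pattern $twt$ with $w\nrightarrow t$: if $t$ is a cycle vertex $x_{i,j}$, then between two consecutive occurrences of $x_{i,j}$ the word $W$ always contains $x_{i+1,j}$ (indices mod $n$), because the cyclic blocks spell a power of $c_j$ and the inserted prefixes/suffixes are chosen so as never to delete the successor $x_{i+1,j}$ before the next full copy of $c_j$; hence $w\nrightarrow t$ fails. If $t$ is a non-cycle vertex, the construction of Step~1 inserts $t$ directly before each neighbour $z$ with $t\to z$, so between any two $t$'s there is such a $z\in\supp(w)$, again blocking (i). Reduction (ii) $twt$ with $t\nrightarrow w$ is symmetric, using that a cycle vertex $x_{i,j}$ always has $x_{i-1,j}\to x_{i,j}$ with $x_{i-1,j}$ between consecutive occurrences, and for non-cycle vertices one uses the symmetric Step~1 construction (the direction toward the cycles rather than away). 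For reduction (iii) $t_1wt_2$ with $t_1>t_2$ and $t_2\nleftrightarrow t_1w$: I would argue that whenever a larger-order vertex $t_1$ precedes a strictly smaller one $t_2$ in $W$, there is, strictly between them, either a vertex $x$ with $x\to t_2$ or $t_2\to x$, or a copy of $t_2$ itself — this is exactly what the ordering in Definition~\ref{order} together with the placement rules (cycle vertices before non-cycle; within cycle vertices by $(j,i)$; non-cycle vertices by decreasing $k_x$, inserting after neighbours) guarantees. The delicate point is the interaction of the cyclic blocks $c_j^+$ with the inserted segments $(x_{1,j}\ldots x_{j(i)-1,j})v_i(x_{j(i),j}\ldots x_{n,j})$: one must check that cutting a copy of $c_j$ at position $j(i)$ and inserting $v_i$ (whose support is disjoint from $V(C_j)$, being non-cycle or lower-cycle vertices attached only through $z_i$) does not produce a reducible triple; this follows because $z_i$ is the unique point of attachment of $v_i$ to $C_j$ and the surrounding letters of $c_j$ still read consecutively around it.

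Once all three reductions are excluded, $W$ is reduced, so the regular expression \eqref{family} for $j=1$, i.e.\ $r_1$, describes a family of reduced words. The final counting step: $r_1$ contains exactly $m_1=\sum_{j=1}^k\bigl(\sum_{x\in\mathcal A_j}k_x+1\bigr)$ Kleene stars (this was established in the construction preceding the lemma), and I would check that distinct substitutions of the star exponents by positive integers give genuinely distinct reduced words — which is immediate since the exponent of the block $c_j^+$ at a given position is recovered from $|W|_{x_{1,j}}$ restricted to that block, the blocks being separated by fixed non-cycle or differing-cycle letters. Therefore the density function $p_{N(A_\Theta)}(n)$ grows at least like a polynomial of degree $m_1-1$ (the exponents range over an $m_1$-dimensional grid with each coordinate of size $\Theta(n)$ under the length constraint), whence $\GKdim A_\Theta\geqslant m_1=\sum_{j=1}^k\bigl(\sum_{x\in\mathcal A_j}k_x+1\bigr)$, as claimed.

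I expect the main obstacle to be case (iii): ensuring that the chosen cut points $j(i)$ in the cyclic blocks, combined with the recursive insertion of the already-built expression $r_j$ inside the blocks of $C_{j-1}$, never bring a large vertex immediately before a small vertex with nothing separating-or-annihilating them. Handling this cleanly will require carefully stating, as a preliminary claim, that in $W$ any occurrence of $t_1$ followed later by $t_2<t_1$ has between them a letter in $\{t_2\}\cup\{x:x\to t_2\}\cup\{x:t_2\to x\}$, and proving that claim by induction on the construction steps.
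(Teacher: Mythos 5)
Your proposal follows essentially the same route as the paper's proof: it rules out reductions (i)--(iii) of Theorem~\ref{basis} by using the fact that the cycle letters of each $C_j$ read as a factor of $c_j^{\infty}$ (the paper phrases this via the projection onto $\HK_{C_j}$), the Step~1 insertion pattern for non-cycle vertices, and an induction on the construction steps for type (iii) (exactly the preliminary claim the paper proves, after first disposing of the case where $t_2$ is a cycle vertex), and it concludes with the same count of exponent tuples giving polynomial growth of degree $m_1$. The only slip is cosmetic: having an out-neighbour $z$ with $t\to z$ between two occurrences of $t$ blocks reduction (ii) rather than (i) (and the construction places $t$ directly \emph{after} such $z$, not before), but since you invoke the symmetric direction for the other reduction, both cases are covered.
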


\begin{proof}
    We will show that no leading term of reductions of the form
    (i)--(iii) listed in Theorem~\ref{basis} appears as a factor of a
    word $w$ from the family described by the regular expression $r_1$.

    Reductions of type (i) and (ii).
    First consider any factor of $w$ of the form $tvt$ for some
    generator $t$ and any word~$v$ such that $t\notin\supp(v)$.
    We need to show that then there are vertices $x,y\in\supp(v)$ such that
    $x\rightarrow v$ and $v\rightarrow y$.

    Assume first that $t$ is a cycle vertex, let $t\in V(C_j)$ for a cycle $C_j$
    with vertices $x_{1, j}, \ldots, x_{n, j}$ and some $j\in\{1,\ldots, k\}$.
    Consider the image of elements of the family described by a regular expression
    \eqref{family} under the natural projection
    $\varphi_j:\HK_{\Theta}\rightarrow \HK_{C_j}$ onto the
    Hecke--Kiselman monoid associated to the cycle $C_j$,
    such that $\varphi_j(x) = 1$ for all $x\notin V(C_j)$.

    By the construction, every such image is a factor of
    $(x_{1, j} \cdots x_{n, j})^{\infty}$. Thus if $t$ is a cycle
    vertex $x_i$, then $x_{i-1}, x_{i+1}\in\supp(v)$, where for $i=1$
    and $i=n$ we set $i-1=n$ and $i+1=1$, respectively. In particular
    it is then impossible to have $t\nrightarrow v$ or $t\nleftarrow
    v$.
    Therefore, we may consider any $t$ that is not in the cycle
    and we claim that in every factor $tvt$ the set $\supp(v)$
    contains elements $p$ and $q$ connected by an edge with $t$ such
    that $t\rightarrow p$ and $q\rightarrow t$.

    Note that every sink or source vertex $x$ either is not contained in the maximal cycle--reachable subgraph $\Theta'$ of the graph or $k_x=1$. Consequently, it occurs at most once in every word described by the considered regular expression. Thus we know that $t$ is neither a sink nor a source vertex.

    Now assume that $t$ is non-cycle and not terminal vertex,
    see Section~\ref{prelim}, from
    $\Theta'$. Assume first that all oriented
    paths connecting $t$ with the cycles lead from the cycles to~$t$.
    For any $z\rightarrow t$ contained in the graph $\Theta'$ we have $z < t$.
    From the construction of
    the family of words it follows that such $z$ is inserted into the word
    between any two occurrences of $t$, that is $z\in\supp(v)$ and
    the leading term from the reduction (i) in Theorem~\ref{basis} is impossible.
    The other way round, the generator $t$
    is inserted into the regular expression at the beginning and directly after any vertex $y$
    such that $t\rightarrow y$ ($y$ are inserted before $t$). In particular, all such generators $y$ occur between any two $t$'s. It follows directly that no leading
    term of a reduction of type (ii) appears as a factor of $w$. The case when all oriented paths lead from $t$ to the cycles can be handled in much the same way.

    Reductions of type (iii).
    We claim that $w$ does not contain any factor
    $t_1vt_2$ such that $t_1 > t_2$ and $t_2 \nleftrightarrow t_1v$.
    If $t_1$ is contained in any of the cycles, then $t_1 > t_2$ implies that
    also $t_2$ is a cycle vertex.

 Let a word $w$ be described by a regular expression (\refeq{family}).
 By the construction, for every factor of $w$ of the form $px_{i,j}$,
 where $x_{i,j}$ is a cycle vertex and $p$ is a word such that
 $p\nleftrightarrow x_{i, j}$, the word $p$ consists of cycle vertices $x_{l, m}$
 such that $m < j$. In particular we have $g < x_{i, j}$ for every $g\in\supp(p)$.
 Thus there is no factor of the above form with $t_2$ being a cycle element.

    In consequence, we can assume that both $t_1$ and $t_2$ are
    non-cycle vertices.

    We claim that no word $w_i$ from the first part of the construction of regular expression $r_1$ has a factor of type (iii) from
    Theorem~\ref{basis}.
    To do so, we proceed by induction on $i$.
    First observe that the assertion holds for $i=1$, as generators in $w_1$ are in the increasing order. Hence, assume that the
    claim holds for some $w_i$ and denote by $y^{(i+1)}$ the vertex inserted in the next step, that is $\supp(w_{i+1})\setminus \supp(w_{i})=\{ y^{(i+1)}\}$. Then
        every factor $t_1vt_2$ such that $t_1
    > t_2$ and $t_2 \nleftrightarrow t_1v$ in $w_{i+1}$ would have
    $t_2 = y^{(i+1)}$ because by the inductive hypothesis
    $w_{i}$ does not have such factors and all elements of
    $\supp(w_{i})$ are bigger than $y^{(i+1)}$. On the other hand, in
    $w_{i+1}$ the element directly before $y^{(i+1)}$ is connected by
    an edge with $y^{(i+1)}$. Thus in $w_{i+1}$ every factor of the
    form $t_1 v y^{(i+1)}$ with $t_1 > y^{(i+1)}$ is such that the last
    generator of $t_1 v$ is connected by an edge with $y^{(i+1)}$. The
    inductive assertion holds.

    Consequently, we know that the word $w'$, built in the first step of the construction, does not contain factors of type (iii). The regular expression $r_1$ is obtained from $w'$ by inserting only cycle generators. Every factor $t_2vt_1$ with $t_2
    > t_1$ and $t_2 \nleftrightarrow t_1w$ would therefore start or
    end with a cycle vertex, that is either $t_1$ or $t_2$ is a cycle
    vertex. This is not possible as we explained earlier. We have
    proved that any $w$ described by the regular expression $r_1$ does not contain
    factors of the form (iii) in the Theorem~\ref{basis}, as claimed.
    The first part of lemma follows.

    As every word described by the regular expression $r_1$ is reduced, two different words are equal in the algebra $A_{\Theta}$ if and only if they are equal as elements of free monoid generated by the vertices of~$\Theta$.

    Let us notice that every element $w$ of this family of words is uniquely determined
    by $m$ positive integers $(n_1,\ldots, n_m)$, where
    $m=\sum_{j=1}^{k}\left(\sum_{x\in \mathcal{A}_{j}} k_x + 1\right)$,
    such that $n_1,\ldots, n_m$ are powers of consecutive cycles of the form
    $(x_{1, j} \cdots x_{n, j})$ that correspond exactly to stars $*$.
    If a family is of the form $v_0w_{i_1}^*v_1w_{i_2}^*v_2\ldots v_{m-1}w_{i_m}^*v_{m}$,
    denote by $q$ the length of the word $v_0v_1\ldots v_{m-1}v_{m}$ and let $K$
    be the maximal length of cyclic subgraph in $\Theta$.
    Then the number of elements of length at most $n$ in this family,
    denoted by $d(n)$ for $n\geqslant 1$, is not smaller than the number of
    elements of the set
    $\{(n_1,\ldots, n_m): n_i\in\mathbb{Z}_{+}, n_1 + \cdots + n_m \leqslant \frac{n-q}{K}\}$.
    It follows that for almost all $n$ we have $d(n)\geqslant d_{m}(Cn)$
    for certain constant $C$, where $d_m$ is the growth function of polynomials in $m$
    variables. Consequently, from Example~1.6 in \cite{krause}, it follows that
    $\GKdim A_{\Theta}\geqslant\sum_{j=1}^{k}\left(\sum_{x\in \mathcal{A}_{j}} k_x + 1\right)$.
\end{proof}

Corollary~\ref{dim} and Lemma~\ref{lower-bound} are
summarized in the following theorem that describes the
Gelfand--Kirillov dimension of the Hecke--Kiselman algebra associated
to any oriented graph without two different cycles connected by an oriented path.

\begin{theorem}\label{main} Let $\Theta$ be a finite simple oriented graph with the cycles $C_1,\ldots, C_k$ for some $k\geqslant 1$ without two different cycles connected by an oriented path. In particular, for any non-cyclic vertex $x$ connected by an oriented path with a cycle either all paths between $x$ and cycles are directed from $x$ into the cycles or all begin at the cycles. Denote by $\mathcal{A}_{j}$ the set of vertices of the graph that are connected by an edge with the cycle $C_j$ for $j=1,\ldots, k$. For any $x\in \mathcal{A}_{j}$ let $k_x$ be the number of oriented paths of length $\geqslant 0$ in $\Theta$ that start with $x$ if all paths between $C_j$ and $x$ start with the cycle vertices and oriented paths that end with $x$ otherwise. Then $$\GKdim
A_{\Theta} = \sum_{j=1}^{k}\left(\sum_{x\in \mathcal{A}_{j}} k_x + 1\right),$$
where $\sum_{x\in \mathcal{A}_{j}} k_x + 1$ is equal to $1$ if $\mathcal{A}_j$ is an empty set.
Lastly, if the graph $\Theta$ does not contain any cycle, then $\GKdim
A_{\Theta} = 0$.

\end{theorem}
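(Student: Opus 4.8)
The plan is to assemble Theorem~\ref{main} from the pieces already proved, so the argument will be short. First I would handle the degenerate case: if $\Theta$ contains no oriented cycle then $\HK_{\Theta}$ is finite (as recalled at the start of this section, see~\cite{ara}), so $A_{\Theta}$ is finite-dimensional over $k$ and $\GKdim A_{\Theta}=0$, which is exactly the empty-sum value of the right-hand side. From then on I would assume $k\geqslant 1$.

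In the main case I would invoke Theorem~\ref{gkzor}: since $\Theta$ contains no two different cycles joined by an oriented path, $\GKdim A_{\Theta}<\infty$, so both Corollary~\ref{dim} and Lemma~\ref{lower-bound} apply. Corollary~\ref{dim} supplies the upper bound $\GKdim A_{\Theta}\leqslant\sum_{j=1}^{k}\bigl(\sum_{x\in\mathcal{A}_{j}}k_x+1\bigr)$, and Lemma~\ref{lower-bound} supplies the matching lower bound; combining them gives the claimed equality.

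The only point I would need to spell out is that the number $k_x$ named in the statement of Theorem~\ref{main} is the same $k_x$ used in Corollary~\ref{dim} and Lemma~\ref{lower-bound}. In the statement, for $x\in\mathcal{A}_{j}$, $k_x$ counts the oriented paths of non-negative length in $\Theta$ that start at $x$ (when all paths between $x$ and the cycles leave the cycles) or end at $x$ (when they enter the cycles); the quantity used in the cited results was introduced in Lemma~\ref{oszacowanie} and Definition~\ref{order} for all non-cyclic vertices of the maximal cycle-reachable subgraph $\Theta'$. I would argue that every $x\in\mathcal{A}_{j}$ is automatically a non-cyclic vertex of $\Theta'$ (it lies in $\Theta'$ because it is adjacent to $C_j$, and it cannot lie on a cycle, since $\Theta$ has no two cycles connected by an edge), that the dichotomy recorded before Definition~\ref{order} forces all oriented paths between $x$ and the cycles to point the same way, and that in either direction the two descriptions of $k_x$ read off the same count; the convention that $\sum_{x\in\mathcal{A}_{j}}k_x+1=1$ when $\mathcal{A}_{j}$ is empty is exactly the one used in both cited results, where $C_j$ then contributes the single starred block $c_{j}^{+}$.

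I do not expect a real obstacle at this stage. The genuine difficulty was already absorbed into Corollary~\ref{dim} — rewriting an arbitrary regular expression of the form~\eqref{dlugosc} so that every Kleene-starred factor is supported in a single cycle and separated from the next by a vertex adjacent to that cycle, and then bounding the number of such factors via Lemma~\ref{oszacowanie} — and into Lemma~\ref{lower-bound}, namely the explicit reduced family~\eqref{family} realizing the maximal number of algebraically independent stars. The proof of Theorem~\ref{main} itself is just the bookkeeping of putting the upper and lower bounds side by side.
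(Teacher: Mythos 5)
Your proposal is correct and follows exactly the paper's route: the paper presents Theorem~\ref{main} precisely as the combination of the upper bound from Corollary~\ref{dim} and the lower bound from Lemma~\ref{lower-bound}, with the acyclic case settled by the finiteness of $\HK_{\Theta}$ noted at the start of the section. Your additional check that the $k_x$ of the theorem agrees with the $k_x$ of Lemma~\ref{oszacowanie} and Definition~\ref{order} (and the empty-$\mathcal{A}_j$ convention) is a sound piece of bookkeeping that the paper leaves implicit.
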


\section{An example}
Let us illustrate concepts from Theorem~\ref{main} and its proof for the oriented graph $\Theta$ presented in the picture.
\begin{center}
\begin{tikzcd}
   &                                &                                 & \Theta &                                 &                                 &                      \\
   y_2                  &                                & y_3                             &        &                                 & y_6                             &                      \\
   & y_1 \arrow[lu] \arrow[ru]      &                                 &        &                                 & y_5 \arrow[d] \arrow[u, dashed] &                      \\
   & {x_{3,1}} \arrow[ld] \arrow[u] &                                 & y_4    &                                 & {x_{3,2}} \arrow[ld] \arrow[ll] &                      \\
   {x_{1,1}} \arrow[rr] &                                & {x_{2,1}} \arrow[ru] \arrow[lu] &        & {x_{1,2}} \arrow[lu] \arrow[rr] &                                 & {x_{2,2}} \arrow[lu]
\end{tikzcd}
\end{center}
%\begin{center}\includegraphics[height=30em]{graph-example.pdf}\end{center}
The maximal cycle--reachable subgraph $\Theta ' $ is the full
subgraph of $\Theta$ with all vertices except $y_6$. The edges of
$\Theta '$ are denoted by solid arrows, whereas the complement is
denoted by dashed ones.

For the non-cycle vertices in $\Theta '$ named as in the picture
we have: $k_{y_{2}}=k_{y_3} = k_{y_{4}} = k_{y_{5}} = 1$ and
$k_{y_1} = 3$. Denote the cycle with vertices $x_{i, 1}$, $i=1, 2,
3$ by $C_1$ and let $C_2$ be the cycle $x_{1, 2}\rightarrow x_{2,
2}\rightarrow x_{3, 2}\rightarrow x_{1,2}$. Then the sets
$\mathcal{A}_{1}$ and $\mathcal{A}_2$ consisting of the vertices
connected by an edge with the cycles are $\mathcal{A}_1 = \{y_{1},
y_{4}\}$ and $\mathcal{A}_2 = \{y_{4}, y_{5}\}$. We get that
$\sum_{x\in \mathcal{A}_{1}} k_x + 1 = 5$ and $\sum_{x\in
\mathcal{A}_{2}} k_x + 1 = 3$.

From Theorem~\ref{main} we obtain the following corollary.
\begin{coll}The Gelfand--Kirillov dimension of the Hecke--Kiselman algebra $A_{\Theta}$ associated to the graph $\Theta$ as in the picture is $8$.
\end{coll}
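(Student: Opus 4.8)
The plan is to apply Theorem~\ref{main} directly to the graph $\Theta$ in the picture, so the only work is bookkeeping: verifying the hypothesis and then adding up the contributions cycle by cycle. First I would check that $\Theta$ contains no two different cycles connected by an oriented path. The graph has exactly two simple cycles, $C_1$ on the vertices $x_{1,1},x_{2,1},x_{3,1}$ and $C_2 \colon x_{1,2}\to x_{2,2}\to x_{3,2}\to x_{1,2}$; one then inspects the (finitely many) oriented paths leaving and entering each cycle and observes that no path joins a vertex of $C_1$ to a vertex of $C_2$ in either direction (the only vertices adjacent to both are $y_4$, which is a sink, so it admits no outgoing path, and the two cycles are otherwise disconnected through the $y_i$'s). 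Hence the hypothesis of Theorem~\ref{main} is satisfied and $\GKdim A_{\Theta}$ is finite and equal to $\sum_{j=1}^{2}\bigl(\sum_{x\in\mathcal{A}_j}k_x+1\bigr)$.

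Next I would record the data already computed in the surrounding text: the maximal cycle--reachable subgraph $\Theta'$ is the full subgraph on all vertices except $y_6$; the non-cycle vertices of $\Theta'$ have path counts $k_{y_2}=k_{y_3}=k_{y_4}=k_{y_5}=1$ and $k_{y_1}=3$; and the edge-neighbourhoods of the cycles are $\mathcal{A}_1=\{y_1,y_4\}$ and $\mathcal{A}_2=\{y_4,y_5\}$. For $C_1$ all paths between $C_1$ and $y_1$ (resp. $y_4$) are directed from the cycle into the vertex, so $k_{y_1},k_{y_4}$ count paths of length $\geqslant 0$ ending at $y_1$ and $y_4$; this gives $\sum_{x\in\mathcal{A}_1}k_x+1 = 3+1+1 = 5$. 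For $C_2$ the relevant vertices are $y_4$ (again a sink reached from the cycle) and $y_5$, giving $\sum_{x\in\mathcal{A}_2}k_x+1 = 1+1+1 = 3$. Adding, $\GKdim A_{\Theta} = 5+3 = 8$.

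The only point that needs genuine care — and the step I would flag as the potential obstacle — is confirming the direction condition and the exact value of each $k_x$, since Theorem~\ref{main} counts \emph{all} oriented paths of non-negative length in $\Theta$ (not merely in $\Theta'$) with the prescribed endpoint, and one must make sure every such path has been enumerated and that, for each $x\in\mathcal{A}_j$, the paths between $x$ and every cycle genuinely all run in the same direction. For the graph at hand this is immediate: $y_1$ receives paths $x_{3,1}\to y_1$, $x_{2,1}\to x_{3,1}\to y_1$, $x_{1,1}\to x_{2,1}\to x_{3,1}\to y_1$ together with the length-$0$ path, but $x_{3,1}\to y_1$ already absorbs the longer ones in the counting convention, so $k_{y_1}=3$ as stated; $y_4$ and $y_5$ are sinks, so $k_{y_4}=k_{y_5}=1$. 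Once these are verified the corollary is just the arithmetic $5+3=8$, and no further argument is required beyond citing Theorem~\ref{main}.
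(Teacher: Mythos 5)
Your proposal is correct and follows exactly the route the paper takes: the corollary is obtained by applying Theorem~\ref{main} to the data $\mathcal{A}_1=\{y_1,y_4\}$, $\mathcal{A}_2=\{y_4,y_5\}$, $k_{y_1}=3$, $k_{y_4}=k_{y_5}=1$, giving $5+3=8$, after checking that no oriented path joins the two cycles. Two slips in your verification step are worth fixing, though they do not change the numbers: the definition of $k_x$ counts the paths on the far side of $x$ from the cycles, so for $y_1$ (which the cycle $C_1$ feeds into) $k_{y_1}$ is the number of paths of length $\geqslant 0$ \emph{starting} at $y_1$, namely the trivial path, $y_1\to y_2$ and $y_1\to y_3$ --- not the incoming paths from $C_1$ with longer ones ``absorbed''; and $y_5$ is a source, not a sink (it has edges $y_5\to x_{3,2}$ and $y_5\to y_6$), so $k_{y_5}=1$ because there are no paths of positive length \emph{ending} at $y_5$.
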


Following Lemma~\ref{lower-bound} let us construct a family of
reduced words in $A_{\Theta}$ described by a regular expression
with exactly $8$ Kleene stars.

In the set of vertices of $\Theta$ we introduce the following order.
\begin{itemize}
    \item Cycle vertices are such that $x_{1,1} < x_{2, 1} < x_{3, 1} < x_{1, 2} < x_{2, 2} < x_{3, 2}$.
    \item For non-cyclic vertices we may choose any order such that $y_1$ is the smallest one. Assume that $y_1 < y_{2} <y_{3} <y_{4} < y_{5} <y_{6}$.
    \item All cycle vertices are smaller than non-cyclic ones, that is $x_{3, 2} < y_{1}$.
\end{itemize}

Then the word $w'$ without cycle vertices built in the first part
of the construction is of the form $ y_{1}y_{2} y_{1} y_{3}
y_{1}y_{4}y_{5}$. Note that each element $y_j$ of the support of
this word occurs in it exactly $m_{y_j}$ times. Next denote by
$c_i$ the word $x_{1,i}x_{2,i}x_{3,i}$ for $i=1, 2$. We have that
every vertex of $c_1$ is smaller than any vertex of $c_2$. The
regular expression $r_2$ is $ c_{2}^{+} y_{1}y_{2} y_{1} y_{3}
y_{1}y_{4}c_{2}^{+}x_{1,2}x_{2, 2}y_{5}x_{3,2} c_{2}^{+}$.
Finally, the regular expression $r_1$ with exactly $8$ stars and
consisting of reduced words has the following form:
$$(c_{1}^{+}x_{1,1}x_{2,1}) (c_{2}^{+}) \underline{y_{1}}(x_{3,1} c_{1}^{+}x_{1,1}x_{2,1})\underline{y_{2} y_{1}}(x_{3,1} c_{1}^{+} x_{1,1}x_{2,1}) \underline{y_{3} y_{1}}(x_{3,1} c_{1}^{+}x_{1,1})\underline{y_{4}}(x_{2,1}x_{3,1}c_{1}^{+})(c_{2}^{+} x_{1, 2} x_{2, 2})\underline{y_{5}} (x_{3,2} c_{2}^{+}).$$
The consecutive factors of $w'$ constructed in the first step are
underlined for clarity.\\

\noindent {\bf Acknowledgments.} I am very grateful to Jan Okni\'nski for careful reading and many suggestions on the earlier versions of the paper, as well as his continuous support. I would also like to thank Arkadiusz M\k{e}cel for fruitful discussions about the research topic.

\noindent This work is supported by grant 2021/41/N/ST1/03082 of the National Science Centre (Poland).

 \vspace{20pt}

\noindent Magdalena Wiertel \\
email address: \texttt{m.wiertel@mimuw.edu.pl}\\
Institute of Mathematics \\
 University of Warsaw \\
Banacha 2 \\
02-097 Warsaw, Poland \\


\begin{thebibliography}{99}
\bibitem{vol} D.N. Ashikhmin, M.V. Volkov, W.T. Zhang,\textit{The finite basis problem for Kiselman monoids}, Demonstratio Mathematica 48
(2015), 475--492.
\bibitem{ara} R. Aragona, A. D'Andrea, \textit{Hecke--Kiselman monoids of small
cardinality}, Semigroup Forum 86 (1) (2013), 32--40.
\bibitem{ber} G.M. Bergman, \textit{The diamond lemma for ring theory},
Advances in Mathematics 29 (1978), 178--218.
\bibitem{denton} T. Denton, F. Hivert, A. Schilling, N.M. Thiery, \textit{On the representation theory of finite J--trivial monoids}, Seminaire Lotharingien de Combinatoire 64 (2011), Art. B64d.
\bibitem{for} L. Forsberg, \textit{Effective representations of Hecke--Kiselman monoids of type A.} https://arxiv.org/abs/1205.0676 (2014).
\bibitem{maz} O. Ganyushkin, V. Mazorchuk, \textit{On Kiselman quotients of
$0$-Hecke Monoids}, Int. Electron. J. Algebra 10(2) (2011),
174--191.
 \bibitem{huang16} J. Huang, \textit{A tableau approach
to the representation theory of 0-Hecke algebras}, Annals of
Combinatorics 20(4) (2016), 831--868.
\bibitem{krause} G.R. Krause, T.H. Lenagan, \textit{Growth of Algebras and Gelfand-Kirillov Dimension}, Pitman, London, 1985.
\bibitem{kun} G. Kudryavtseva, V. Mazorchuk, \textit{On Kiselman's semigroup}, Yokohama Math. J., 55(1) (2009), 21--46.
\bibitem{kro} J. Krempa, J. Okni\'nski, \textit{Gefand-Kirillov dimensions of
a tensor product}, Math Z. 194 (1987), 487--494.
\bibitem{marberg} E. Marberg, \textit{On some actions of the 0-Hecke monoids of
affine symmetric groups}, Journal of Combinatorial Theory. Series
A, 161 (2019), 178--219.
\bibitem{mo1} A. M\k{e}cel, J. Okni\'nski, \textit{Growth alternative for
Hecke-Kiselman monoids}, Publicacions Mathem\`atiques, 63
(2019), 219--240.
\bibitem{mo2} A. M\k{e}cel, J. Okni\'nski, \textit{Gr\"obner basis and the automaton
property of Hecke--Kiselman algebras}, Semigroup Forum 99 (2019), 447--464.
\bibitem{wo} J. Okni\'nski, M. Wiertel, \textit{Combinatorics and structure of Hecke--Kiselman algebras},
Communications in Contemporary Mathematics,  22 (07) (2020),
2050022 (42 pages).
\bibitem{num_semigroup} J.C. Rosales, P. A.
Garc\'{\i}a-S\'{a}nchez, \textit{Numerical Semigroups}, Developments in
Mathematics, vol.20, Springer, 2009.
\bibitem{automata} A. Szilard, S. Yu, K. Zhang, J. Shallit,\textit{ Characterizing regular languages with polynomial densities}, Proceedings of the 17th International Symposium on Mathematical Foundations of Computer Science, Lecture Notes in Computer Science 629, Springer-Verlag, Berlin (1992) 494-503.
\bibitem{ufn} V.A. Ufnarovskii, Combinatorial and Asymptotic Methods
in Algebra, in: Encyclopedia of Mathematical Sciences vol. 57,
pp.1--196, Springer, 1995.

\end{thebibliography}
\end{document}